\documentclass[a4paper,10pt,reqno]{amsart}
\usepackage{amssymb, amsmath, graphicx, amsthm, enumitem, multicol,amsfonts}
\usepackage[margin=1in]{geometry}
\usepackage{cite}
\usepackage{ bbold }
\usepackage{longtable}
\usepackage{booktabs}
\usepackage{placeins}
\usepackage{hyperref}
\usepackage{tikz-cd}
\usepackage{breqn}
\usepackage[utf8]{inputenc}
\theoremstyle{definition}
\newtheorem{definition}{Definition}
\newtheorem {proposition}   {Proposition}
\newtheorem {theorem}    {Theorem}

\newtheorem{remark}{Remark}

\DeclareMathOperator{\C}{\mathbb{C}}

\DeclareMathOperator{\Z}{\mathbb{Z}}
\DeclareMathOperator{\N}{\mathbb{N}}

\newcommand{\Tr}{\mathrm{Tr}\,}

\newcommand{\ba}{\begin{align}}
\newcommand{\ea}{\end{align}}
\newcommand{\bea}{\begin{eqnarray}}
\newcommand{\eea}{\end{eqnarray}}
\newcommand{\be}{\begin{equation}}
\newcommand{\ee}{\end{equation}}

\newcommand{\eg}{{\it e.g.~}}

\newcommand{\nb}{\bar{n}}

\numberwithin{equation}{section}

\title{
Heavy Heisenberg Algebras, Zero Modes and Modularity}

 \author{Darlayne Addabbo}
 \address{Darlayne Addabbo, Department of Mathematics, SUNY Polytechnic Institute, Utica, NY 13502} \email{addabbd@sunypoly.edu}
 \author{ Christoph A.~Keller}
 \address{Christoph A. Keller,  Department of Mathematics, University of Arizona, Tucson, AZ 85721-0089, USA}
 \email{cakeller@arizona.edu}

\theoremstyle{plain}
\newtheorem{thm}{Theorem}[section]
\newtheorem{lem}[thm]{Lemma}
\newtheorem{prop}[thm]{Proposition}
\newtheorem{cor}[thm]{Corollary}
\newtheorem*{theorem-non}{Theorem}

\theoremstyle{definition}
\newtheorem{defn}{Definition}[section]

\theoremstyle{remark}

\begin{document}

 \begin{abstract}
 It is known due to a result of Miyamoto that zero mode correlators of weight $1$ vectors of vertex operator algebras are quasi-modular forms, and that their generating series are Jacobi forms. In this article we generalize this to higher weight vectors. We consider weight 2 vectors that generate what we call heavy Heisenberg algebras. We prove that their zero mode correlators are a specific type of quasi-modular forms. As a consequence, they too can be summed up into a generating series with Jacobi-like modular transformation properties. As an application of our results, we sketch an argument that gives the asymptotic behavior of the dimensions of states of our VOAs.

\end{abstract}
\maketitle

\section{Introduction}
Ever since the work of Zhu \cite{Z}, it has been known that vertex operator algebras (VOAs), if they are sufficiently ``nice", are connected to modular objects. The precise version of this statement is that a rational $C_2$-cofinite VOA has characters and torus correlators that are  weakly holomorphic vector-valued modular and Jacobi forms. Here torus correlators are traces of matrix elements with $n$ vertex operators $Y(v^i,z)$ inserted; characters are the special case with no vertex operators inserted.

There is an immediate generalization of such correlators: in addition to vertex operators $Y(v,z)$, one can also insert their zero modes $o(v)$. This motivated \cite{MR5000697}, which studies the modular transformation properties of vertex operator algebra (VOA) correlation functions with zero modes inserted. It turns out that ``zero mode correlators", i.e. correlation functions with only zero modes inserted, transform like quasi-modular forms, while ``mixed correlators", that is, correlation functions with both zero modes and vertex operators transform like quasi-Jacobi forms. 

The fact that mixed correlators generalize Zhu's original correlators is only one motivation for studying them. The more important motivation for the present paper is that generating series of zero mode correlators of weight 1 vectors have been shown by Miyamoto to be Jacobi forms of weight $0$ \cite{MR1738180}. To be more precise, if $S(o(v)^n;\tau) = \Tr_V(o(v)^n q^{L_0-c/24})$ is the $n$-th zero mode correlator of a weight 1 vector $v$ in a holomorphic VOA $V$, then its generating function
\be
\chi(\tau,z) = \sum_{n=0}^\infty \frac1{n!} S(o(v)^n;\tau) (2\pi iz)^n = \Tr_V e^{2\pi i z o(v)}q^{L_0-c/24}
\ee
is a Jacobi form of weight 0 and index $\langle v, v\rangle/2$, where $\langle v, v\rangle$ is defined by $\langle v , v \rangle {\bf 1}=v(1)v$ with $v(1)$ the coefficient of $x^{-2}$ in the vertex operator $Y(v,x)$. Namely, under $\gamma=\begin{pmatrix}a&b\\c&d\end{pmatrix}\in SL(2,\mathbb{Z})$ it transforms as
\be\label{weight1gentrafo}
\chi(\gamma\tau, \gamma z) = \exp\left(\frac{\pi i \langle v,v\rangle c z^2}{c\tau+d}\right) \chi(\tau,z)\ .
\ee
For rational $C_2$-cofinite VOAs the resulting functions become vector valued, but for simplicity we will focus on holomorphic VOAs here.
An obvious question to ask is whether similar results can be obtained for zero point correlators of higher weight vectors. In this paper, we show that in certain cases the answer is yes.

Let us describe our results in more detail. The starting point for the analysis of \cite{MR5000697} was the observation that (\ref{weight1gentrafo}) implies that the zero mode correlators of the vector $v$ satisfy the following transformation formula: 
\be\label{weight1zeromodes}
(c\tau+d)^{-s} S(o(v))^s; \gamma\tau) = \sum_{k=0}^{s/2} \left(\frac{2\pi ic \langle v,v\rangle}{(c\tau+d)(2\pi i)^2}\right)^k \frac{s!}{2^k k! (s-2k)!} S(o(v))^{s-2k}; \tau)\ .
\ee 
 In particular, this implies that the zero mode correlators are so-called quasi-modular forms of homogeneous weight $s$ whose ``tail'', that is the sum of terms with $k>0$, consists of lower point zero mode correlators. These observations motivated the definition of \emph{heavy Heisenberg algebras (HHAs)}, which we introduced and proved several results about in \cite{MR5000697}. The basic motivation behind this definition is to mimic the properties of the weight 1 Heisenberg algebra used by Miyamoto for vectors of higher weight. One of the main results in \cite{MR5000697} is that such HHAs indeed provide zero mode correlators that are quasi-modular forms with a similar structure as (\ref{weight1zeromodes}).

In the present paper, we focus on \emph{weight 2 HHAs}, that is HHAs generated by a vector $v$ of weight 2. We show that if we define the following generating function of the zero mode correlators of $v$,
\begin{equation}
\Psi(\tau, z)=\sum_{n=1}^\infty \frac{S(o(v)^n;\tau)}{(n-1)!n!}\left({2\pi i z}\right)^{2n}\ , 
\end{equation}
then its modular transform is  
\begin{equation}\label{introtransform}
\Psi(\gamma\tau, \frac{z}{c\tau+d})=\exp\left(\frac{2\pi i A c z^2}{c\tau+d}\right)\Psi(\tau, z) \ ,
\end{equation}
where $A$ is some parameter of the HHA. To establish this, we first study two explicit examples of such HHAs: one coming from weight 2 vectors in a holomorphic lattice VOA, and the other coming from the conformal vector $\omega$ in a holomorphic VOA. In each of these examples, we prove explicit formulas for the zero mode correlators and their modular transformation properties. We show in particular that the zero mode correlators satisfy modular transformation properties similar to (\ref{weight1zeromodes}). The expressions involve Naryana numbers, and they allow us to indeed sum the zero mode correlators into a generating series $\Psi(\tau,z)$, which then transforms as above. 

Next, we show that the transformation formulas for our explicit examples of weight 2 HHAs are illustrative of a larger phenomenon: We prove that in fact \emph{all} zero mode correlators for weight 2 HHAs satisfy the same modular transformation properties as those in our examples and, as an immediate corollary, the generating series for these zero mode correlators also satisfy the same transformation property (\ref{introtransform}). This thus provides the sought-after higher weight analogue of (\ref{weight1gentrafo}).

As an application of our results, we derive formulas for the dimension of states in our VOAs. In particular, for a holomorphic VOA $V$ of central charge a multiple of $24$ containing a weight 2 HHA, we decompose $V$ into eigenspaces of $L_0$ and $o(v)$,
$$
V=\oplus_{n,Q}V_{n,Q}\ ,
$$
where $L_0 w = n w ,\, o(v) w = Q w,$ for  $w \in V_{n,Q}.$ We explain how the dimensions of these eigenspaces $d(n,Q)=\dim V_{n,Q}$ are related to the expansion coefficients of the generating function $\Psi(\tau,z)$.
We work this out explicitly in the case of lattice VOAs, where we can express the dimensions of the eigenspaces $d(n,Q)$ in terms of the coefficients of eta functions and lattice theta functions.
Finally, we sketch an argument on how to use our transformation formulas for $\Psi(\tau,z)$ to obtain information about the asymptotic growth of the $d(n,Q)$ for large values of $n$ and $Q$. In this we mimic the ``crossing kernel" method used in \cite{Apolo:2024kxn}. Even though our argument is not rigorous, we do believe that the final result is correct.

We briefly outline the structure of this paper. In Section \ref{background}, we review some background from \cite{MR5000697}. In Section \ref{HHAex}, we study the explicit examples of weight 2 HHAs and prove formulas for the zero mode correlators, their modular transformation properties, and the modular transformation properties of their generating series. In Section \ref{general}, we study general weight 2 HHAs and prove our main theorem. In Section \ref{Hankel}, we illustrate these results in the case of the lattice HHA from Section \ref{HHAex}. We also sketch an argument for the asymptotic behavior of dimensions of eigenspaces of our VOAs. Finally, in the Appendix, we provide the proof of a combinatorial identity required to derive the transformation properties satisfied by the zero mode correlators of our lattice HHAs. 

\subsection*{Acknowledgements}
We thank Alejandra Castro and Greg Moore for helpful comments on the draft. DA gratefully acknowledges AMS-Simons Research Enhancement PUI Grant 104909. CAK thanks ETH Z\"urich and the Albert Einstein Center for Fundamental Physics at the University of Bern for hospitality, where part of this work was completed.

\section{Background}\label{background}
We recall some background information that we will use throughout the paper. All of the VOAs in this paper are holomorphic and of central charge a multiple of $24$. Whenever we do not specify the VOA at hand, we assume it is a generic holomorphic VOA of central charge a multiple of $24$. We denote by $o(x)$ the zero mode of the vector $x$. We will use torus modes $a[m]$ first introduced in \cite{Z}. We will also refer to the functions denoted $P_k(z,\tau)$ in \cite{Z}.

Heavy Heisenberg algebras were first introduced and studied in \cite{MR5000697}. Here we recall their definition and some facts about them:

\begin{defn}
Let $V$ be a holomorphic VOA and $A = \{a^1,a^2,\ldots, a^N\}\subset V$ be a finite set of states homogeneous with respect to the $L[0]$-grading, not necessarily primary. Let $H_A:=\text{span}( L[-1]^k a^j: k\geq 0, j=1,\ldots, N )$. We say that $A$ generates a \emph{heavy Heisenberg algebra} (HHA) $H_A$ if the zero modes of all states in $H_A$ commute,
\be\label{HHAcomm}
[o(b),o(c)]= 0 \qquad \forall b,c \in H_A\ ,
\ee
and $H_A$ is closed under all square mode multiplications with $m\geq 0$, that is
\be\label{HHAact}
c[m]b \in H_A \qquad \forall m \geq 0,\ c, b \in H_A\ .
\ee
\end{defn}
As in \cite{MR5000697}, by a slight abuse of terminology we will often refer to $A$ itself as a heavy Heisenberg algebra.
For the purposes of this paper, we will focus on \emph{HHAs of  weight 2}, that is, HHAs of the form $\{{\bf 1},v\}$ where $\bf{1}$ is the vacuum vector and $v$ is a vector of weight 2 with respect to the $L[0]$-grading.

The functions featured most prominently in this paper are quasi-modular forms, so we review the definition here. Quasi-Jacobi forms will also be used in this paper. We refer the reader to \cite{MR5000697}, \cite{MR2796409}, and \cite{MR3248159} for their definition.

\begin{defn}Let $\mathcal{H}$ denote the complex upper half plane. Let $f:\mathcal{H}\rightarrow \mathbb{C}$ be a holomorphic function, and let $k$ and $s\ge0$ be integers. Then $f$ is called a {{\it quasi-modular function}} of weight $k$ and depth $s$ if there exist holomorphic functions $f_0, \cdots, f_s$ over $\mathcal{H}$ with $f_s$ not identically $0$, such that
\begin{equation}
(c\tau+d)^{-k}f\left(\frac{a\tau+b}{c\tau+d}\right)=\sum_{j=0}^sf_j(\tau)\left(\frac{c}{c\tau+d}\right)^j
\end{equation}for any $\begin{pmatrix}a&b\\c&d\end{pmatrix}\in SL_2(\mathbb{Z})$ and any $\tau\in \mathcal{H}$. If all $f_j(\tau)$ grow at most polynomially as $\Im \tau \rightarrow 0$, then we say $f(\tau)$ is a \emph{quasi-modular form}. We will sometimes refer to the terms with $j>0$ in the sum as the tail of the form.
\end{defn}

We recall from \cite{MR5000697} that a \emph{mixed correlator} is a function of the form \be\label{intromixcorr}
\Tr_M o(b^1) \cdots o(b^m) Y(\zeta_1^{L_0}a^1,\zeta_1) \ldots Y(\zeta_n^{L_0} a^n,\zeta_n) q^{L_0-c/24}\ ,
\ee where $M$ is a module for the VOA $V$ at hand. 
We use the standard notation $q=e^{2\pi i \tau}$ and also $\zeta_i= e^{2\pi i z_i}$. Since all of our VOAs in this paper are holomorphic, we will always take $M=V,$ and will mostly supress $V$ when denoting our correlators.

These correlators and their modular properties were studied in detail in \cite{MR5000697}, which in particular lead to the following proposition:
\begin{prop}\label{prop:HHA}
The mixed correlators of an HHA are quasi-Jacobi forms of index 0 and homogeneous weight. Their weight is given by the sum of the $L[0]$-gradings of all insertions.
\end{prop}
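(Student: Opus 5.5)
The plan is to argue by induction on the number $m$ of zero mode insertions $o(b^1),\ldots,o(b^m)$, taking Zhu's theorem as the base case. For $m=0$, Zhu's results for holomorphic $C_2$-cofinite VOAs say that
\[
\Tr\, Y(\zeta_1^{L_0}a^1,\zeta_1)\cdots Y(\zeta_n^{L_0}a^n,\zeta_n)q^{L_0-c/24}
\]
is an elliptic, modular object in the $z_i$ and $\tau$, of weight $\sum_i \mathrm{wt}[a^i]$ with respect to the square-bracket grading, and of index $0$. This is a (quasi-)Jacobi form of index $0$. Inhomogeneous $a^i$ are handled by linearity, and the statement concerns the homogeneous pieces.

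For the inductive step I would turn one zero mode into a vertex operator that is integrated out. The identity I would use is
\[
o(b)=\mathrm{Res}_{\zeta}\,\zeta^{-1}Y(\zeta^{L_0}b,\zeta)=\oint_0^1 dz\, Y(\zeta^{L_0}b,\zeta),
\]
that is, the constant term in $z$ along a horizontal cycle. This lets me express a correlator with $m$ zero modes as a contour integral of a correlator with $m-1$ zero modes and $n+1$ vertex operators. Such an integral is not modular as it stands, because the integration contour is not $SL_2(\Z)$-invariant.

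The mechanism I would use instead is Zhu's recursion formula for the extra insertion $b$. It gives
\[
\Tr\, o(b)\,\cdots = \Tr\, Y(\ldots)\cdots \;-\; \sum_{j,m}(\text{Weierstrass-type } P_{m+1}(z_{j}-z,\tau),\ \text{or } G_{2k}(\tau)) \cdot \Tr\,\cdots Y((b[m]a^j)\ldots)\cdots .
\]
Equivalently, I move $o(b)$ across the vertex operators using $[o(b),Y(a,z)]=\sum_{m\ge0}\binom{\mathrm{wt}\,b-1}{m}Y(b(m)a,z)$ rewritten in square modes. I also need the zero mode of $b$ acting on other zero modes, and here the HHA axiom (\ref{HHAcomm}) is exactly what is used: zero modes of states in $H_A$ commute, so no non-integrable commutator terms arise among them. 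The axiom (\ref{HHAact}) guarantees that the products $b[m]c$ for $c\in H_A$ stay inside $H_A$. Hence after reduction every term is again a mixed correlator of the same type, with fewer zero modes or lower total weight, multiplied by $E_2$, by derivatives of $P_k$, or by the quasi-Jacobi function $P_1$ (a $\zeta$-type function). These functions are quasi-modular or quasi-Jacobi of index $0$ and of weight matching the drop in $L[0]$-weight, since $\mathrm{wt}[b[m]a]=\mathrm{wt}\,b+\mathrm{wt}\,a-m-1$ is paired with $P_{m+1}$ of weight $m+1$.

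Closure of quasi-Jacobi forms of index $0$ under products and under the constant-term operation in an elliptic variable then shows that each term is quasi-Jacobi of index $0$ with weight $\sum\mathrm{wt}[b^i]+\sum\mathrm{wt}[a^j]$. For the constant-term operation, $E_2$ appears from the $z$-independent parts of $P_2$. This completes the induction.

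The main obstacle is this last step: showing that integrating out the elliptic variable $z$ produces a quasi-Jacobi form, not merely a holomorphic function. Under $\tau\mapsto\gamma\tau$ the horizontal cycle maps to a $(c,d)$ cycle, and the difference must be controlled by the quasi-periodicity of the integrand. I would try to establish that the integrand is a polynomial in $P_1(z-z_j)$ and elliptic functions, whose constant term is computable with the standard rules for the Taylor coefficients of $P_k$. If that bookkeeping becomes unwieldy, I would instead quote the corresponding structural result from \cite{MR5000697} directly.
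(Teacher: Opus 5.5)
Your skeleton (induction on the number of zero modes, Zhu's theorem as base case, and a recursion trading one zero mode for a vertex operator while the HHA axioms keep every state in $H_A$) is the route behind this proposition. The paper quotes the proposition from \cite{MR5000697}, where it comes from iterating \eqref{Gaberdiel_recursion}. There are, however, two gaps.

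\textbf{The recursion is misstated.} With only the functions $P_{m+1}(z_j-z,\tau)$ and states $b[m]a^j$, your recursion is wrong as soon as other zero modes $o(b^2),\ldots,o(b^m)$ sit in the trace. Zhu's argument carries the non-zero modes of $Y(\zeta^{L_0}b,\zeta)$ once around the trace, and on the way they must be commuted past those zero modes. This commutator is proportional to $Y(\zeta^{L_0}\,b^i[0]b,\zeta)$ and is nonzero in general. For a weight 2 HHA, $v[0]v=\frac{A}{(2\pi i)^2}L[-1]v$; for the Virasoro HHA, $[L_0,L_n]=-nL_n$. Axiom \eqref{HHAcomm} only removes the zero-mode part $o(b^i[0]b)=[o(b^i),o(b)]$ of this commutator. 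Iterating produces exactly the extra terms of \eqref{Gaberdiel_recursion}: a sum over subsets $S$ of the remaining zero modes, with inserted states $d^S(b)[m]a^j$ where $d^S(b)=(-1)^{|S|}\prod_{i\in S}b^i[0]b$, multiplied by functions $g^{|S|}_{m+1}$ of weight $m+|S|+1$. You need \eqref{HHAact} with $m=0$ to keep $d^S(b)$ in $H_A$. Your weight count must also cover these terms: each $i\in S$ removes $\mathrm{wt}\,b^i$ from the zero modes and adds $\mathrm{wt}\,b^i-1$ to the inserted state, and the missing $1$ is supplied by $g^{|S|}_{m+1}$.

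\textbf{The constant-term step is unproved and unnecessary.} The step you single out as the main obstacle, closure of quasi-Jacobi forms under taking constant terms in an elliptic variable, is never proved. It is also not needed. The recursion is an identity for every value of the auxiliary variable $z$. Its left-hand side $F(o(b^2)\cdots o(b^m)o(b);(a^1,\zeta_1),\ldots)$ is independent of $z$. Each term on the right is a mixed correlator with fewer zero modes, times quasi-Jacobi functions; the leading term has one extra vertex operator, which the induction allows. By the induction hypothesis the right-hand side is therefore a quasi-Jacobi form of index $0$ and the correct weight in $(\tau,z,z_1,\ldots,z_n)$. Evaluating its transformation law at any fixed generic $z$ gives the quasi-Jacobi law for the left-hand side. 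This is exactly how the paper uses the recursion in the $A=0$ case of the proof of Theorem~\ref{zero_mode_transform}.

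So drop the contour-integral representation of $o(b)$ altogether. If you kept it, the constant term would depend on where the contour sits relative to the poles at $z=z_j$, equivalently on where $o(b)$ stands in the trace. That closure statement is therefore not a formality.
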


A \emph{zero mode correlator} is a correlator of the form \eqref{intromixcorr} with only zero modes and no vertex operators inserted. The following result is an immediate corollary to Proposition \ref{prop:HHA}:

\begin{cor}
The zero mode correlators of an HHA are quasi-modular forms of homogeneous weight.
\end{cor}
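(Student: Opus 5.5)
The plan is to obtain the corollary by specializing Proposition \ref{prop:HHA} to the case in which no vertex operators are inserted. A zero mode correlator
\[
S(o(b^1)\cdots o(b^m);\tau)=\Tr_V o(b^1)\cdots o(b^m)\, q^{L_0-c/24}, \qquad b^i\in H_A,
\]
is the mixed correlator \eqref{intromixcorr} with $n=0$. Proposition \ref{prop:HHA} then says it is a quasi-Jacobi form of index $0$ and homogeneous weight $k=\sum_i \mathrm{wt}[b^i]$. Since there are no elliptic variables $z_i$, only the $\tau$-dependence is left, and the task is to check that a quasi-Jacobi form with no elliptic variables is exactly a quasi-modular form in the sense of the definition above.

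The steps are as follows.

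First, recall the quasi-Jacobi transformation law from \cite{MR5000697}. Under $\gamma\in SL_2(\mathbb{Z})$, a quasi-Jacobi form $\phi(\tau,z_1,\dots,z_n)$ of weight $k$ and index $0$ satisfies
\[
(c\tau+d)^{-k}\phi\!\left(\gamma\tau,\tfrac{z_i}{c\tau+d}\right)=\sum_{j}\phi_j(\tau,z)\left(\tfrac{c}{c\tau+d}\right)^j ,
\]
possibly with additional terms polynomial in $\frac{c z_i}{c\tau+d}$. There is also an elliptic transformation law under $z_i\mapsto z_i+\lambda\tau+\mu$, and because the index is $0$ it carries no exponential factor.

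Second, set $n=0$. The elliptic law becomes vacuous, and every term involving $\frac{cz_i}{c\tau+d}$ disappears. What remains is
\[
(c\tau+d)^{-k}f(\gamma\tau)=\sum_{j=0}^s f_j(\tau)\left(\tfrac{c}{c\tau+d}\right)^j ,
\]
with holomorphic $f_j$. Equivalently, one can read this off from the quasi-Jacobi structure as a polynomial in $E_2$ with holomorphic modular coefficients, using $E_2(\gamma\tau)=(c\tau+d)^2E_2(\tau)+\frac{12c(c\tau+d)}{2\pi i}$. Homogeneity of the weight carries over directly.

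Third, check the growth condition required for a quasi-modular form rather than a quasi-modular function. Because $V$ is holomorphic of central charge $c\in 24\mathbb{Z}$, the function $q^{-c/24}\Tr\, o(b^1)\cdots o(b^m)q^{L_0}$ is a $q$-series with only finitely many negative powers. The coefficient functions $f_j$ are built from such traces and from $E_2$ or Eisenstein-type functions, so they grow at most polynomially in the sense of the definition; this is either part of the quasi-Jacobi definition in \cite{MR5000697} or follows from it.

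The main obstacle is bookkeeping: making sure that the conventions used for quasi-Jacobi forms in \cite{MR5000697} (for instance, whether they allow weakly holomorphic behaviour at the cusp) match the growth condition in the quasi-modular definition above. If the two do not match exactly, the statement should be read as "quasi-modular" in the weakly holomorphic sense that the quasi-Jacobi definition provides.
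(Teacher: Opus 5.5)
Your proposal is correct and follows the paper's route: the paper also obtains the corollary immediately by specializing Proposition \ref{prop:HHA} to correlators with no vertex operators inserted, so that the index-0 quasi-Jacobi transformation law reduces to the quasi-modular one. Your growth-condition step is not in the paper and is slightly off: a $q$-expansion with finitely many negative powers (for instance $q^{-t}$ when $c=24t$) grows exponentially, not polynomially, at the cusp, so your fallback weakly holomorphic reading is the accurate one.
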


The proof of these propositions involved a recursion relation. To state this, we need the following notation: Given a set of states $b^i$ and an index vector $\vec s=(s_1,s_2,\cdots,s_s)$ define
\[o(b)^{\vec s}=\prod_i^{\leftarrow}o(b^{s_i}),\] where the arrow indicates the order of the factors so that 
\[o(b)^{(s_1,\cdots,s_s)}=o(b^{s_s})o(b^{s_{s-1}})\cdots o(b^{s_{1}}).\]
Given an operator $O$ and states $a^i$ in some VOA, we define  
\be \label{def_F}
F_M(O;(a^1,\zeta_1),\ldots,(a^n,\zeta_n);\tau) := \Tr_M O Y(\zeta_1^{L_0}a^1,\zeta_1) \cdots Y(\zeta_n^{L_0} a^n,\zeta_n) q^{L_0}
\ee 
and the actual correlator
\be \label{def_S} S_M(O; (a^1,z_1),\cdots,(a^n,z_n);\tau):=F_M(O;(a^1,\zeta_1),\ldots,(a^n,\zeta_n);\tau)q^{-c/24}.
\ee
Following \cite{MR5000697}, we will state most results in terms of the $F_M$. It is the $S_M$ however that are the actual modular objects. 
Since the two differ only by a factor of $q^{-c/24}$, any statements on the $F_M$ below can be easily modified to apply to the $S_M$ as well.

We will need the following recursion relation, which is Equation (3.2.1) in \cite{MR5000697}:
Take states $a^i$ and $b^j$, not necessarily primary. Moreover assume that all $o(b^i)$ commute. Then

\begin{multline}\label{Gaberdiel_recursion}
F(o(b)^{\{1,2,\ldots,r\}};(a^1,\zeta_1),\ldots, (a^n,\zeta_n);\tau)=F(o(b)^{\{1,2,\ldots,r\}}o(a^1);(a^2,\zeta_2),\ldots, (a^n,\zeta_n);\tau)
\\ + \sum_{j=2}^n \sum_{S\subset {\{1,2,\ldots,r\}}} \sum_{m\in \N} g^{|S|}_{m+1}(\zeta_j/\zeta_1,q) F(o(b)^{{\{1,2,\ldots,r\}}-S};(a^2,\zeta_2), \ldots, (d^S(a^1)[m]a^j,\zeta_j), \ldots, (a^n,\zeta_n);\tau)
\end{multline}
where the middle sum is over all subsets $S$ of ${\{1,2,\ldots,r\}}$, and $d^S(a^1)[m]$ is the $m$th square bracket mode of 
$d^S(a^1)= (-1)^{|S|} \prod_{i\in S} b^{i}[0] a^1\,$ where $S=\{s_1,\ldots, s_s\}\subset \{1,\ldots, r\}.$ 
We note that there are more general recursion relations than the one stated here, but we will not need them here.

The $g^l_{m+1}(\zeta_i/\zeta_j,q)$ that appear here are certain quasi-Jacobi forms of weight $m+l+1$ defined in \cite{Gaberdiel:2012yb}. They are a generalization of the $P_k(z,\tau)$ of \cite{Z}. 
If $f(\tau,\underline{z})$ is a quasi-Jacobi form of weight $k$, then we define the \emph{modular anomaly of $f$} $\Delta f$ as
\be\label{modanomaly}
\Delta f = (c\tau+d)^{-k} f(\gamma \tau,\gamma\underline z) - f(\tau,\underline z),
\ee where $\gamma\in SL(2,\Z).$ Clearly
\be
\Delta(fg)=f\Delta g+(\Delta f)g+(\Delta f)(\Delta g).
\ee
We have 
\be
\Delta \tilde P_1 = - \frac{2\pi i c z}{c\tau+d}\ , 
\qquad \Delta P_2 = - \frac{2\pi ic}{c \tau+d},
\ee 
(where $P_1(\zeta,q): = P_1(\zeta,q)+\pi i$)
and
\be \label{Pkg2}
\Delta P_k=0 \qquad \textrm{for}\ k>2\ .
\ee
Moreover,
\be
\Delta g_2^1 = \frac{2\pi i c}{c\tau+d}\left(P_1(z, \tau)+\pi i\right), \quad \Delta g_3^1 =  \frac{2\pi ic}{c \tau+d}P_2(z,\tau)-\frac{1}{2}\left(\frac{2\pi i c}{c\tau+d}\right)^2
\ee
and if $m>2$,
\[\Delta g^1_{m+1} = \frac{2\pi i c}{c\tau+d} P_m(z,\tau)\ .\]
More background on the functions $P_k$ and $g_i^j$ can be found in \cite{MR5000697} and its Appendix A. In Theorem \ref{zero_mode_transform} below, we will use the fact that for $k>2$, because of (\ref{Pkg2}) $P_k$ is a Jacobi form of weight $k$. (In fact, the proof only uses that $P_4$ is a Jacobi form of weight $4$.)

Finally, (\ref{Gaberdiel_recursion}) can be applied recursively to express a full correlator in terms of zero mode correlators, as was found in Lemma 4.5 in \cite{MR5000697}:
\begin{lem}\label{lem:FulltoZero}
Let $\{a^1,a^2,\ldots , a^N\}$ be an HHA and $\vec s \in N^n$, that is an $n$-tuple with entries in $\{1,\ldots, N\}$. Then
\be
F((a^{s_1},\zeta_1),(a^{s_2},\zeta_2),\ldots, (a^{s_n},\zeta_n);\tau) = F(o(a)^{\vec s};\tau) + \sum_{k=1}^{n-1}\sum_{\vec t \in N^k} G_{\vec t}(\zeta_1,\zeta_2,\ldots, \zeta_k;\tau) F(o(a)^{\vec t};\tau)
\ee
where the $G_{\vec t}(\zeta_1,\ldots, \zeta_n)$ are some functions in the algebra generated by the $g^l_{m+1}(\zeta_i/\zeta_j,q)$. 
\end{lem}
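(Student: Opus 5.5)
The plan is to prove a slightly stronger statement by induction on the number $n$ of vertex operator insertions. In the stronger version we allow arbitrary zero modes to be present from the start: for any index vector $\vec u$ and insertions $b^1,\ldots,b^n\in H_A$, the function $F(o(a)^{\vec u};(b^1,\zeta_1),\ldots,(b^n,\zeta_n);\tau)$ should equal a combination of zero mode correlators $F(o(a)^{\vec t};\tau)$. The coefficients are to lie in the algebra generated by the $g^l_{m+1}(\zeta_i/\zeta_j,q)$, with the leading term (coefficient $1$) being $F(o(a)^{\vec u}o(b^1)\cdots o(b^n);\tau)$.

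The lemma is the special case $\vec u=\emptyset$, $b^i=a^{s_i}$. One bookkeeping point is the indexing by $N$, which I would handle at the outset rather than at the end. By (\ref{HHAact}) and weight counting, only finitely many states of $H_A$ ever occur: $c[m]b$ has weight $\mathrm{wt}\,b+\mathrm{wt}\,c-m-1$, and $b^i[0]$ does not raise weight. So I would enlarge $\{a^1,\ldots,a^N\}$ to a finite homogeneous spanning set of the relevant subspace of $H_A$, including the needed $L[-1]^ka^j$. Since zero modes are linear, every $o(x)$ for $x$ in this span is a linear combination of the $o(a^j)$. After the induction is done, all products of zero modes can therefore be re-expanded as zero mode words in the chosen generators.

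The base case $n=0$ is trivial. For the inductive step, I would apply (\ref{Gaberdiel_recursion}) with $r=|\vec u|$. This is legitimate because all zero modes of states of $H_A$ commute by (\ref{HHAcomm}).

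The recursion produces two kinds of terms.
\begin{itemize}
\item The first term is $F(o(a)^{\vec u}o(b^1);(b^2,\zeta_2),\ldots;\tau)$. It has $n-1$ insertions and one more zero mode, so the induction hypothesis applies directly. It also supplies the leading term with coefficient $1$.
\item The remaining terms carry a factor $g^{|S|}_{m+1}(\zeta_j/\zeta_1,q)$. Each has fewer zero modes and $n-1$ insertions, one of which is replaced by $d^S(b^1)[m]b^j$. This state lies in $H_A$: iterated $b^i[0]$ applied to $b^1$ stays in $H_A$ by (\ref{HHAact}) with $m=0$, and then the $[m]$-product with $b^j$ stays in $H_A$ again by (\ref{HHAact}).
\end{itemize}
Expanding $d^S(b^1)[m]b^j$ in the spanning set and using multilinearity of $F$, the induction hypothesis applies to each piece. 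The coefficients produced are products of $g$'s with the coefficients from the inductive hypothesis, so they remain in the algebra generated by the $g^l_{m+1}(\zeta_i/\zeta_j,q)$. The sum over $m$ is finite because $d^S(b^1)[m]b^j=0$ for $m$ large.

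Collecting terms by the length $k$ of the resulting zero mode word gives the stated form. Lengths range over $0\le k\le n+|\vec u|$, and only the leading term has full length. The constant $k=0$ term is just $F(\tau)$, a degenerate zero mode correlator, and can be absorbed. For the original lemma, terms with $k<n$ come with nontrivial $G_{\vec t}$, matching the statement.

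I expect the main obstacle to be the bookkeeping of the first paragraph: keeping everything inside a fixed finite spanning set, and correctly treating the $L[-1]$-descendants. One must check that the zero modes of these descendants are expressible through $o(a^j)$, or simply include them among the generators. One must also ensure that the recursion is applied to genuine states of $H_A$ at each step, so that both the commutativity hypothesis and the closure (\ref{HHAact}) remain available throughout the induction.
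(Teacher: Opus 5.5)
Your proposal is correct and is essentially the paper's route: the paper simply applies the recursion (\ref{Gaberdiel_recursion}) repeatedly (this is Lemma 4.5 of \cite{MR5000697}), and your induction on the number of vertex operator insertions, with the hypothesis strengthened to allow arbitrary zero modes and arbitrary insertions from $H_A$, is the natural way to make that precise. Three small fixes:
\begin{itemize}
\item The descendant obstacle you flag is settled by the standard fact $o(L[-1]x)=0$, which holds since $L[-1]=L_{-1}+L_0$ and $o(L_{-1}x)=-\mathrm{wt}(x)\,o(x)$. Hence every $o(x)$ with $x\in H_A$ is a combination of the $o(a^j)$, and no enlarged finite spanning set is needed.
\item Your claim that $b^i[0]$ does not raise weight is false (e.g.\ $v[0]v\propto L[-1]v$ has weight $3$), but it is never actually used, because your induction hypothesis already ranges over all of $H_A$.
\item No $k=0$ term ever occurs: with one insertion left, the recursion gives $F(o(a)^{\vec u};(x,\zeta);\tau)=F(o(a)^{\vec u}o(x);\tau)$.
\end{itemize}
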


\section{Heavy Heisenberg Algebras of Weight 2: Some Examples}\label{HHAex}

\subsection{Some generalities}
We want to consider weight 2 Heavy Heisenberg Algebras generated by the vacuum and a single state $v$ of weight 2, $\{\mathbf{1}, v\}$. We also want to assume that $v$ is quasi-primary, that is $L[1]v=0$. Then it satisfies the following relations:

\begin{proposition}\label{ABHHA}
Take an HHA $\{{\bf 1},v\}$ where $v$ has weight 2 with respect to $L[0]$. Moreover, assume that $L[1]v=0$. The field $v$ then satisfies

\begin{equation}\label{general_HHA}
v[m]v=
    \begin{cases}
        \displaystyle\frac{A}{(2\pi i)^2}L[-1]v
        & \text{if } m=0\\
        \displaystyle\frac{2A}{(2\pi i)^2}v & \text{if } m=1\\
         0 & \text{if }  m=2\\
        \displaystyle \frac{B}{(2\pi i)^4}{\bf 1} & \text{if } m=3\\
         0 & \text{if } m>3,\\
    \end{cases} 
    \end{equation}
    for some $A,B\in \mathbb{C}$, where we introduced the factors of $2\pi i$ for future convenience. 
\end{proposition}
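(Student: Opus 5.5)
We would prove this by combining weight counting with the HHA closure condition (\ref{HHAact}) and the skew-symmetry of square bracket modes. First, since $v$ has $L[0]$-weight 2, the square bracket mode $v[m]$ lowers the $L[0]$-weight by $m-1$, so $v[m]v$ has weight $3-m$. By (\ref{HHAact}), $v[m]v\in H_A=\mathrm{span}(L[-1]^k{\bf 1}, L[-1]^k v)$. Now $L[-1]{\bf 1}=0$, so $H_A$ is spanned by ${\bf 1}$ (weight 0) and $L[-1]^kv$ (weight $2+k$). Consequently $v[m]v=0$ for $m>3$ (negative weight), $v[3]v=\beta{\bf 1}$ (weight 0), $v[2]v=0$ (weight 1, and $H_A$ has no weight-1 part), $v[1]v=\alpha v$ (weight 2), and $v[0]v=\alpha' L[-1]v$ (weight 3). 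This is purely a bookkeeping step; we set $B=(2\pi i)^4\beta$.

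The content of the statement is therefore the relation $\alpha'=\alpha/2$ between the $m=0$ and $m=1$ coefficients. For this we would use skew-symmetry in the square bracket VOA (which is a VOA with the same properties, grading by $L[0]$): $Y[v,x]v=e^{xL[-1]}Y[v,-x]v$. Taking coefficients, this gives
\[
v[0]v=\sum_{j\ge0}\frac{(-1)^{j+1}}{j!}L[-1]^j\, v[j]v
= -v[0]v + L[-1]v[1]v - \tfrac12 L[-1]^2v[2]v+\tfrac16 L[-1]^3 v[3]v-\cdots .
\]
Using the values above, $v[2]v=0$ and $L[-1]^3{\bf 1}=0$, so $2v[0]v=L[-1]v[1]v=\alpha L[-1]v$. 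Thus $v[0]v=\frac{\alpha}{2}L[-1]v$. Setting $A=(2\pi i)^2\alpha/2$ gives exactly the $m=0$ and $m=1$ entries of (\ref{general_HHA}).

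The quasi-primarity hypothesis $L[1]v=0$ does not seem strictly needed in this argument. We could use it as a consistency check: applying $L[1]$ to $v[0]v$ via the commutator $[L[1],v[0]]=(L[-1]v)[1]+\dots$ yields a relation compatible with the result. The main point needing care is the precise form of skew-symmetry for square bracket modes. Specifically, one must check that Zhu's square bracket vertex operators $Y[\cdot,x]$ define a VOA structure with translation operator $L[-1]$, so that the skew-symmetry identity takes the standard form used above. We would quote Zhu \cite{Z} for this.
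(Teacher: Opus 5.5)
Your proof is correct. It differs from the paper's only in how the $m=0$ and $m=1$ constants are related.

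The first step is the same as in the paper. Weight counting plus the closure condition (\ref{HHAact}) force $v[2]v=0$ and $v[m]v=0$ for $m>3$, and leave one free constant each for $m=0,1,3$.

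For the relation between the $m=0$ and $m=1$ constants, the paper uses the square-bracket commutator formula. It computes $[L[1],v[0]]=(L[-1]v)[2]+2(L[0]v)[1]+(L[1]v)[0]=2v[1]$, which needs $L[1]v=0$. It then writes $v[1]v=\tfrac12 L[1]v[0]v=\tfrac12\alpha' L[1]L[-1]v=2\alpha' v$, using $[L[1],L[-1]]=2L[0]$ and $L[1]v=0$ again.

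You instead use skew-symmetry, $2v[0]v=L[-1]v[1]v-\tfrac12 L[-1]^2v[2]v+\tfrac16 L[-1]^3v[3]v$. Your signs are right, and the last two terms vanish for the reasons you give.

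Both arguments rest on the same input: Zhu's theorem that $(V,Y[\cdot,z],\mathbf 1,\tilde\omega)$ is a VOA. The paper needs it for the commutator formula, you for skew-symmetry. Your worry about the translation operator is unfounded. With the paper's normalization $k=2\pi i$, rescaling the formal variable by $k$ and $\tilde\omega$ by $k^2$ preserves the axioms, and $L[-1]=2\pi i(L_{-1}+L_0)$ is the translation operator. So neither route is more elementary. The paper's is a direct Virasoro computation tailored to quasi-primary $v$.

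Your route is more general. It shows $C=2A$ for any weight-2 generator of an HHA, so the hypothesis $L[1]v=0$ is superfluous for this proposition.

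The consistency check you sketch is in fact exactly the paper's proof. Note that the relevant commutator begins with $(L[-1]v)[2]$, not $(L[-1]v)[1]$.
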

\begin{proof}
The closure condition (\ref{HHAact}) ensures that only the fields $\mathbf{1}, v$ and $L[-1]$-descendants of $v$ can appear on the right hand side of (\ref{general_HHA}). Moreover, the grading of the VOA fixes the fields on the right hand side of (\ref{general_HHA}) up to normalization. In particular, since there are no fields of weight 1 or less than 0, the terms for $m=2$ and $m>3$ vanish. This leaves the terms $m=0,1,3$, each of which comes with a constant $A,C,B$, 
\be
v[0]v=\frac{A}{(2\pi i)^2}L[-1]v\ , \qquad v[1]v = \frac{C}{(2\pi i)^2} v\ , \qquad v[3]v =\frac{B}{(2\pi i)^4}{\bf 1} \ .
\ee
However, we can relate the constants $A$ and $C$: The general commutator formula
\be
[L[m], v[n]] = \sum_{i=0}^{\infty} \binom{m+1}{i} (L[i-1] v)[m+n-i+1]\ 
\ee
gives 
\be
[L[1],v[0]]= 2(L[0]v)[1]-2v[1]=2v[1]\ ,
\ee
where we used the fact that $L[1]v=0$ and $(L[-1]v)[s]=-sv[s-1]$.
It follows that
\be
v[1]v = \frac12 [L[1],v[0]]v = \frac12 L[1]\frac{A}{(2\pi i)^2}L[-1]v = \frac{2A}{(2\pi i)^2}v\ ,
\ee
which indeed fixes $C=2A$.
\end{proof}

We note that the additional condition $L[1]v=0$, which requires $v$ to be a quasi-primary field, is not much of a constraint. If $v$ is instead a descendant $v = L[-1]u$, then we have 
\begin{multline}
0=v[2]v=(L[-1]u)[2] L[-1]u = -2u[1]L[-1]u= 2[L[-1],u[1]]u -2L[-1]u[1]u\\
= 2(L[-1]u)[1]u -2L[-1] K \mathbf{1} = -2 u[0]u\ .
\end{multline}
So the only possibility is that $u[0]u=0$, which can only happen if the bilinear form $\langle\cdot, \cdot\rangle$ is degenerate, so that $\langle u, u\rangle=0$ for $u\neq 0$. The examples we consider are actually unitary, in which case this cannot occur. But even in cases where this does occur, the right strategy is then to work with the weight 1 HHA $\{\mathbf{1},u\}$ instead, and note that $v=L[-1]u$ has vanishing zero mode, $o(L[-1]u)=0$, so that its zero mode correlators are trivial anyway.

We will now give various examples of weight 2 HHAs for different values of $A$ and $B$.

\subsection{HHAs in Holomorphic Lattice VOAs}\label{latticeHHAs}
We first give the example of an HHA coming from a lattice VOA. This is a slight generalization of the HHA introduced in Section 5.2 of \cite{MR5000697} insofar as we construct an infinite family of lattice HHAs indexed by elements $\lambda\in \C$, the original case being $\lambda=\frac{1}{2}$.

Let $L$ be an even positive definite lattice of dimension $l$ with bilinear form denoted by $\langle\cdot, \cdot\rangle$. We can associate a vertex operator algebra $V_L$ to it in the usual way \cite{MR843307,MR996026}. 
As a vector space, the VOA is given by $V_L=S(\widehat{\mathfrak{h}}^{-})\otimes \mathbb{C}_\epsilon[L]$, where $S(\widehat{\mathfrak{h}}^{-})$ is the Heisenberg algebra of the ambient space $\mathfrak{h}=L\otimes_{\mathbb{Z}}\mathbb{C}$ and $\mathbb{C}_\epsilon[L]$ is the group algebra of $L$ twisted by a cocycle $\epsilon$ that satisfies $\epsilon(\alpha,\alpha)=(-1)^{\langle \alpha,\alpha \rangle}$ and $\epsilon(\alpha,\beta)/\epsilon(\beta,\alpha)=(-1)^{\langle\alpha,\beta\rangle}$.
We pick $h\in L\otimes_{\mathbb{Z}}\mathbb{C}$ with $\langle h,h\rangle=2\lambda,$ where $\lambda\in \mathbb{C}.$ 
We then define
$x:=h[-1]^2{\bf 1}=h(-1)^2{\bf 1}-\frac{\lambda}{6}{\bf 1}.$ Then $x$ satisfies the relations
\begin{equation}\label{torusmodes_lambda}
x[m]x=
    \begin{cases}
        \displaystyle\frac{8\lambda}{2\pi i}(h(-1)^2{\bf 1}+h(-1)h(-2){\bf 1}=\frac{4\lambda}{(2\pi i)^2}L[-1]x
        & \text{if } m=0\\
        \displaystyle\frac{8\lambda}{(2\pi i)^2}x & \text{if } m=1\\
         0 & \text{if }  m=2\\
        \displaystyle \frac{8\lambda^2}{(2\pi i)^4}{\bf 1} & \text{if } m=3\\
         0 & \text{if } m>3\ ,\\
    \end{cases}
\end{equation}
which implies that $\{{\bf 1}, x\}$ is a weight 2 HHA with $A=4\lambda$ and $B=8\lambda^2$.

As these relations generalize those of the weight 2 HHA introduced in Section 5.2 of \cite{MR5000697}, we can slightly modify their Proposition 5.4 to get explicit expressions for the zero mode correlators of $x$:
\begin{proposition}\label{quasimod_lambda}
Let $x:=h[-1]^2{\bf 1}=h(-1)^2{\bf 1}-\frac{\lambda}{6}{\bf 1}\in V_L$ and let $o(x)$ denote the zero mode of $x$. Fix $n\ge 0.$ Then 
\be\label{lambdazeromode}
S(o(x)^n;\tau):=
\Tr_{V_L}o(x)^nq^{L_0-l/24}
=\sum_{j=0}^n\binom{n}{j}\sum_{\alpha\in L}\langle h, \alpha\rangle^{2j}q^{\langle \alpha, \alpha\rangle/2}\frac{1}{\eta(q)^{l-1}}\left(4\lambda q\frac{d}{dq}\right)^{n-j}\eta(q)^{-1}\ .
\ee

\end{proposition}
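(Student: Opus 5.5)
The plan is to compute the zero mode $o(x)$ explicitly as an operator on the Fock-space realization $V_L=S(\widehat{\mathfrak h}^-)\otimes\C_\epsilon[L]$, observe that it splits into two commuting, simultaneously diagonalizable pieces, and then evaluate the trace by the binomial theorem. Write $x=h(-1)^2\mathbf 1-\frac{\lambda}{6}\mathbf 1$ in ordinary modes. The zero mode of $h(-1)^2\mathbf 1$ is the normally ordered quadratic $\sum_{m\in\Z}:h(-m)h(m):=h(0)^2+2\sum_{m>0}h(-m)h(m)$, and the vacuum contributes $o(\mathbf 1)=\mathrm{id}$. This gives
\be
o(x)=h(0)^2+2\sum_{m>0}h(-m)h(m)-\frac{\lambda}{6}\ .
\ee
The two pieces $h(0)^2$ and $2\sum_{m>0}h(-m)h(m)-\frac{\lambda}{6}$ commute, since $h(0)$ is central in the Heisenberg algebra. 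The operator $h(0)^2$ acts on $S(\widehat{\mathfrak h}^-)\otimes e^\alpha$ by the scalar $\langle h,\alpha\rangle^2$.

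Next I diagonalize the oscillator piece. Assume first $\lambda\neq0$. Set $e=h/\sqrt{2\lambda}$, so that $\langle e,e\rangle=1$, and complete $e$ to a basis $e,f_2,\ldots,f_l$ of $\mathfrak h$ with $\langle e,f_i\rangle=0$. Such a basis exists because $e$ is non-isotropic, even for complex $h$. The Fock space then factors as $S(\widehat{\C e}^-)\otimes S(\widehat{\mathfrak f}^-)$. On the first factor,
\be
2\sum_{m>0}h(-m)h(m)=4\lambda\sum_{m>0}e(-m)e(m)=4\lambda N_e\ ,
\ee
where $N_e$ is the level operator of the $e$-oscillators. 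On the second factor this piece acts trivially. Since $-\frac{\lambda}{6}=4\lambda\cdot(-\frac1{24})$, the oscillator piece equals $4\lambda(N_e-\frac1{24})$.

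Then I expand $o(x)^n=\sum_j\binom nj h(0)^{2j}\,\bigl(4\lambda(N_e-\tfrac1{24})\bigr)^{n-j}$ and split $q^{L_0-l/24}$ compatibly with the tensor factorization. The three factors of the trace are as follows.
\begin{itemize}
\item The lattice part gives $\sum_\alpha\langle h,\alpha\rangle^{2j}q^{\langle\alpha,\alpha\rangle/2}$.
\item The $l-1$ transverse bosons give $\eta^{-(l-1)}$.
\item The $e$-boson gives $\sum_N p(N)\bigl(4\lambda(N-\tfrac1{24})\bigr)^{n-j}q^{N-1/24}=\bigl(4\lambda q\tfrac{d}{dq}\bigr)^{n-j}\eta^{-1}$, using $\eta^{-1}=\sum_N p(N)q^{N-1/24}$.
\end{itemize}
Multiplying these three factors gives exactly \eqref{lambdazeromode}.

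The main obstacle is the zero-mode bookkeeping: one must check that the normal-ordering constant, the $-\frac\lambda6\mathbf 1$ correction from passing between square-bracket and ordinary modes, and the $-\frac1{24}$ from $q^{-c/24}$ combine to exactly $4\lambda(N_e-\frac1{24})$. I would verify this against the known $\lambda=\frac12$ case of Proposition 5.4 in \cite{MR5000697}.

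The degenerate case $\lambda=0$ is handled either by continuity in $h$, since both sides are polynomial in $h$, or directly. In the direct argument, the $e$-direction is replaced by an isotropic $h$. The operator $\sum h(-m)h(m)$ is then nilpotent on each graded piece and contributes no trace, so only the $j=n$ term survives, in agreement with the right side at $\lambda=0$.
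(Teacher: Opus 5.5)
Your proposal is correct and follows essentially the same route as the paper: take $h$ as the first vector of an orthogonal basis, read off the eigenvalue $\langle h,\alpha\rangle^2+4\lambda\sum_j n_{1j}-\frac{\lambda}{6}$ of $o(x)$ on the standard spanning vectors, and factor the trace, with $-\frac{\lambda}{6}=4\lambda\cdot(-\frac{1}{24})$ absorbed into $(4\lambda q\frac{d}{dq})^{n-j}\eta(q)^{-1}$. Your separate treatment of isotropic $h$, either by polynomial continuity in $h$ or by nilpotency of $\sum_{m>0}h(-m)h(m)$, is a small genuine improvement, since the paper's step of extending $h$ to an orthogonal basis tacitly requires $\lambda\neq 0$.
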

\begin{proof}
We extend $h=h_1$ to an orthogonal basis $h_1,h_2,\ldots h_l$ of $\mathfrak{h}$. The proof is then nearly identical to that of Proposition 5.4 in \cite{MR5000697}, except that 
\begin{dmath*}o(x)^nh_1(-n_{1k_{1}})\cdots h_i(-n_{ij})\cdots h_l(-n_{lk_{l}}) \mathfrak{e}_{\alpha}\\=(\langle h_1, \alpha\rangle^2+4\lambda\sum_{j=1}^{k_1} n_{1j}-\frac{\lambda}{6})^nh_1(-n_{1k_{1}})\cdots h_i(-n_{ij})\cdots h_l(-n_{lk_{l}}) \mathfrak{e}_{\alpha}.\end{dmath*}
Following the rest of their proof with this modification gives the result.
\end{proof}

We next derive the modular transformation formula satisfied by $F(o(x)^n;\tau).$ This will involve some combinatorics. In particular, we need to introduce Narayana numbers:
\begin{definition}Let $n,j\in \mathbb{N}$, $1\le j\le n$. The $(n,j)$ {\emph{Narayana number}} $N(n,j)$ is given by 
\begin{equation}\label{Narayana}
N(n,j)=\frac{1}{n}\binom{n}{j}\binom{n}{j-1}.\end{equation}
\end{definition}

We then have the following transformation properties for the zero mode correlators for the weight 2 HHA  \eqref{torusmodes_lambda}:

\begin{proposition}\label{prop1_lambda}
Let $n\in \mathbb{N}$ and $\gamma=\begin{pmatrix}a&b\\c&d\end{pmatrix}\in SL(2,\mathbb{Z})$. Then
\begin{equation}\label{zero_mode_transformation_lambda}(c\tau+d)^{-2n}S(o(x)^n;\gamma\tau)=\sum_{j=1}^nN(n,j)(4\lambda)^{n-j}(n-j+1)!\left(\frac{c}{(c\tau+d)2\pi i}\right)^{n-j}S(o(x)^j;\tau),
\end{equation}
or equivalently,
\begin{equation}\label{lambda_zero_mode}S(o(x)^n;\gamma\tau)=\sum_{j=0}^{n-1}N(n, j+1)(4\lambda)^j(j+1)!\left(\frac{c}{(c\tau+d)2\pi i}\right)^j(c\tau+d)^{2n}S(o(x)^{n-j};\tau).
\end{equation}
\end{proposition}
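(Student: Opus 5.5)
The plan is to start from the explicit formula of Proposition \ref{quasimod_lambda} and split each summand into a lattice piece and an eta piece. The lattice piece is the theta series with polynomial insertion
\[
\Theta_j(\tau)=\sum_{\alpha\in L}\langle h,\alpha\rangle^{2j}q^{\langle\alpha,\alpha\rangle/2}.
\]
The eta piece is $\eta^{-(l-1)}D^{n-j}\eta^{-1}$, where $D=4\lambda q\frac{d}{dq}$. Two standard facts govern their behaviour. First, $\Theta_j$ is obtained from the Jacobi theta function $\Theta(\tau,z)=\sum_\alpha q^{\langle\alpha,\alpha\rangle/2}\zeta^{\langle h,\alpha\rangle}$ by taking $z$-derivatives at $z=0$. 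Since $L$ is even unimodular (so that $V_L$ is holomorphic), $\Theta$ is a Jacobi form of weight $l/2$ and index $\lambda$. Hence $\Theta_j$ transforms with a tail of the form
\[
\sum_k \binom{2j}{2k}\frac{(2k)!}{k!}\left(\frac{\lambda c}{2\pi i(c\tau+d)}\right)^k(c\tau+d)^{2j-2k}\Theta_{j-k}.
\]
Second, $q\frac{d}{dq}\log\eta=-E_2/24$, and $E_2$ has the anomaly $\frac{12c}{2\pi i(c\tau+d)}$ (times $(c\tau+d)^2$). From this, $\eta^{-1}D^{m}\eta^{-1}$ is a quasi-modular form whose transformation can be computed recursively. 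Equivalently, I would write $\eta^{-l}\,e^{-\lambda\,\cdot}$-type generating functions. The cleanest device is to note that $\sum_m \frac{t^m}{m!}\eta^{-1}\cdot(\text{shift})$ can be handled via the heat-type identity
\[
D^m\eta^{-1}=\eta^{-1}\,Y_m\!\left(\tfrac{\lambda}{6}E_2,\dots\right).
\]

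Rather than combining these two pieces by brute force, I would package everything into a generating function. Define
\[
\Phi(\tau,t)=\sum_n \frac{t^n}{n!}S(o(x)^n;\tau)=\Tr\, e^{t\,o(x)}q^{L_0-l/24}.
\]
By Proposition \ref{quasimod_lambda}, this factorizes as
\[
\Phi(\tau,t)=\Big(\sum_j \tfrac{t^j}{j!}\Theta_j\Big)\cdot \eta^{-(l-1)}\cdot \sum_m \tfrac{t^m}{m!}D^m\eta^{-1}.
\]
The first factor is a Gaussian-type integral transform of $\Theta(\tau,z)$. The last factor equals $\sum_{k\ge 1}p(k-1)q^{k-1/24}e^{4\lambda t(k-1/24)}$-like sums, namely $\eta(\tau)^{-1}$ with $q$ shifted, i.e. $\eta(\tau+4\lambda t/2\pi i)^{-1}$ up to normalization. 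With this, the modular transformation of $\Phi$ can be read off from the transformation of $\eta$ under $\tau\mapsto\gamma\tau$ combined with the shift $\tau\mapsto \tau+\epsilon$, where $\epsilon=4\lambda t/(2\pi i)$. To expand, I use
\[
\gamma(\tau+\epsilon')=\gamma\tau+\epsilon \quad\text{with}\quad \epsilon'=\frac{\epsilon(c\tau+d)^{-2}}{1-c\epsilon/(c\tau+d)}.
\]
Expanding in powers of $\frac{c}{c\tau+d}$ then produces the geometric-type series.

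The coefficients that arise are compositions of the binomial/Gaussian coefficients from the theta side with those of the Möbius reparametrization $t\mapsto t/(1-\kappa t)$ from the eta side. The final step is the combinatorial identity showing that the resulting coefficient of $\kappa^{n-j}S(o(x)^j)$ equals $N(n,j)(4\lambda)^{n-j}(n-j+1)!$, where $\kappa=c/((c\tau+d)2\pi i)$. Here the Narayana numbers enter through the expansion
\[
t^j(1-\kappa t)^{-(j+1)}\cdot(\ldots),
\]
which involves products of binomials such as $\binom{n}{j}\binom{n-1}{j-1}$. I expect this coefficient matching to be the main obstacle, since it is the identity the paper defers to the Appendix. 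I would first verify it for $n\le 3$ directly, then prove it by extracting coefficients and applying Vandermonde convolution.
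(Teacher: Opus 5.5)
Your route is correct in outline, and it is genuinely different from the paper's. The paper's proof works termwise. It transforms each $S(o(x)^n;\tau)$ separately, combining the coefficientwise theta law \eqref{DM_transformation} with Zagier's formula \eqref{Zagier_transformation} for $D^r\eta^{-1}$, and then has to prove the triple-sum identity of Lemma~\ref{comb_iden_lambda} by Vandermonde. Your $\Phi(\tau,t)=\Tr e^{t\,o(x)}q^{L_0-l/24}$ makes that identity unnecessary, provided you transform both factors in closed form and fix two slips.

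Put $\mu=\frac{4\lambda c}{2\pi i(c\tau+d)}$ and $t'=t/(1-\mu t)$.

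\emph{Theta side.} Your $\Theta_j$ tail has the right coefficients $\frac{(2j)!}{k!(2j-2k)!}$, but the overall factor is $(c\tau+d)^{2j+l/2}$, not $(c\tau+d)^{2j-2k}$. Sum it using $\sum_k\frac{(2m+2k)!}{(m+k)!\,k!\,(2m)!}y^k=\frac{1}{m!}(1-4y)^{-m-1/2}$, or via your Gaussian transform. (For $h\notin L$ only the modular half of the Jacobi law holds, which is all you need.) This gives
\[
\sum_j\frac{t^j}{j!}(c\tau+d)^{-2j}\Theta_j(\gamma\tau)=(c\tau+d)^{l/2}(1-\mu t)^{-1/2}\sum_m\frac{t'^m}{m!}\Theta_m(\tau).
\]

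\emph{Eta side.} Your M\"obius identity is misstated: as written it gives $\gamma(\tau+\epsilon')-\gamma\tau\approx\epsilon(c\tau+d)^{-4}$. The correct form is $\gamma\tau+\epsilon(c\tau+d)^{-2}=\gamma(\tau+\epsilon')$ with $\epsilon'=\epsilon/(1-c\epsilon/(c\tau+d))$. So $\epsilon=\frac{4\lambda t}{2\pi i}$ goes to $\epsilon'=\frac{4\lambda t'}{2\pi i}$. You must also keep the automorphy factor of $\eta^{-1}$ at the shifted point, $(c(\tau+\epsilon')+d)^{-1/2}=(c\tau+d)^{-1/2}(1-\mu t)^{1/2}$. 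This exactly cancels the theta side's $(1-\mu t)^{-1/2}$.

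Hence, with $24\mid l$ killing the multipliers,
\[
\Phi\Big(\gamma\tau,\frac{t}{(c\tau+d)^2}\Big)=\Phi\Big(\tau,\frac{t}{1-\mu t}\Big).
\]
The proposition for $n\ge 1$ is then just the coefficient of $t^n/n!$ in
\[
\frac{1}{j!}\Big(\frac{t}{1-\mu t}\Big)^j=\sum_{n\ge j}\frac{n!}{j!}\binom{n-1}{j-1}\mu^{n-j}\frac{t^n}{n!},
\]
since $\frac{n!}{j!}\binom{n-1}{j-1}=N(n,j)(n-j+1)!$ (the Lah numbers). So your anticipated main obstacle is a one-line negative binomial expansion, with exponent $-j$ rather than $-(j+1)$, and no Vandermonde argument is needed.

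What your approach buys is a conceptual origin for the Narayana coefficients and a single functional equation equivalent to the whole proposition. Formally, it is the Laplace dual in $w=(2\pi i z)^2$ of Theorem~\ref{thm:psitrafolattice}, with the factor $e^{\mu w}$ becoming the shift $1/t\mapsto 1/t-\mu$. The paper's termwise route stays closer to standard quasi-modular transformation formulas, at the price of the appendix identity.
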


\begin{proof}
First let us compute explicit expressions for the modular transform $(c\tau+d)^{-2n}S(o(x)^n;\gamma\tau)$. We start from (\ref{lambdazeromode}). To obtain the transform of the first part of this expression, we can use Theorem 1 in \cite{MR1877753} to get
\begin{multline*}
\sum_{n=0}^\infty 2^n\sum_{\alpha\in L}\frac{\langle h, \alpha\rangle^{2n}e^{2\pi i\gamma\tau\langle \alpha, \alpha\rangle/2}}{(2n)!}\left(\frac{2\pi iX}{(c\tau+d)^2}\right)^n\\=(c\tau+d)^{l/2}\sum_{k=0}^\infty\left(\frac{c2\lambda}{c\tau+d}\right)^k\frac{X^k}{k!}\sum_{m=0}^\infty 2^m\sum_{\alpha\in L}\frac{\langle h,\alpha\rangle^{2m}e^{2\pi i \tau\langle \alpha, \alpha\rangle/2}}{(2m)!}(2\pi iX)^m.
\end{multline*}
The equality of the coefficients of $X^j$ in both sides of this expression implies \begin{multline}\label{DM_transformation}
(c\tau+d)^{-2j-l/2}\sum_{\alpha\in L}\langle h, \alpha\rangle^{2j} e^{2\pi i \gamma\tau\langle \alpha, \alpha \rangle/2}=\sum_{k=0}^j\left(\frac{c2\lambda}{c\tau+d}\right)^k\sum_{\alpha\in L}\frac{\langle h, \alpha\rangle^{2(j-k)}e^{2\pi i \tau\langle \alpha, \alpha \rangle/2}(2j)!}{k!(2(j-k))!2^k(2\pi i)^k}.
\end{multline}
Next we turn to the second part of (\ref{lambdazeromode}).
We note that $\frac{1}{\eta(\tau)^{l-1}}$ is modular of weight $\frac{-(l-1)}{2},$ that is 
\begin{equation}\label{eta_transformation}
(c\tau+d)^{\frac{l-1}{2}}\frac{1}{\eta(\gamma\tau)^{l-1}}=\frac{1}{\eta(\tau)^{l-1}}.
\end{equation}
To deal with the derivative of $\eta(\tau)$, we define as in \cite{MR2409678}
\begin{equation*}Df:= q\frac{df}{dq}\end{equation*} and
\begin{equation*}
(k)_n=k(k+1)\cdots (k+n-1).
\end{equation*}
Then by the formula on the top of page 54 in \cite{MR2409678}, we have
\begin{equation*}
D^{n-j}\eta(\gamma\tau)^{-1}=\sum_{r=0}^{n-j}\binom{n-j}{r}\frac{(-\frac{1}{2}+r)_{n-j-r}}{(2\pi i)^{n-j-r}}{c}^{n-j-r}(c\tau+d)^{-\frac{1}{2}+n-j+r}D^r\eta(\tau)^{-1},
\end{equation*}or equivalently,

\begin{equation}\label{Zagier_transformation}
(c\tau+d)^{\frac{1}{2}-2(n-j)}D^{n-j}\eta(\gamma\tau)^{-1}=\sum_{r=0}^{n-j}\binom{n-j}{r}\frac{(-\frac{1}{2}+r)_{n-j-r}}{(2\pi i)^{n-j-r}}\left(\frac{c}{c\tau+d}\right)^{n-j-r}D^r\eta(\tau)^{-1}.
\end{equation}
Combining equations \eqref{DM_transformation}, \eqref{eta_transformation}, and \eqref{Zagier_transformation} then gives us the following expression for the transform of (\ref{lambdazeromode}):
\begin{eqnarray*}(c\tau+d)^{-2n}S(o(x)^n;\gamma\tau)=\sum_{j=0}^n\binom{n}{j}\sum_{k=0}^j\left(\frac{c}{c\tau+d}\right)^k\sum_{\alpha\in L}\frac{\langle h, \alpha\rangle^{2(j-k)}e^{2\pi i \tau\langle \alpha, \alpha \rangle/2}(2j)!(2\lambda)^k}{k!(2(j-k))!2^k(2\pi i)^k}\frac{1}{\eta(\tau)^{l-1}}(4\lambda)^{n-j}\\\times\sum_{r=0}^{n-j}\binom{n-j}{r}\frac{(-\frac{1}{2}+r)_{n-j-r}}{(2\pi i)^{n-j-r}}\left(\frac{c}{c\tau+d}\right)^{n-j-r}D^r\eta(\tau)^{-1}.
\end{eqnarray*}
We can rewrite this slightly by combining like terms and pulling the powers of $\eta(\tau)$ and the sums over $j, k$ and $r$ to the left, \begin{multline}\label{transformation_formula_lambda}(c\tau+d)^{-2n}S(o(x)^n;\gamma\tau)\\=\frac{1}{\eta(\tau)^{l-1}}\sum_{j=0}^n\sum_{k=0}^j\sum_{r=0}^{n-j}\binom{n}{j}\binom{n-j}{r}\left(\frac{c}{(c\tau+d)2\pi i}\right)^{n-j+k-r}\sum_{\alpha\in L}\frac{\langle h, \alpha\rangle^{2(j-k)}e^{2\pi i \tau\langle \alpha, \alpha \rangle/2}(2j)!(4\lambda)^{n-j-k}(2\lambda)^{2k}}{k!(2(j-k))!}\\\times{(-\frac{1}{2}+r)_{n-j-r}}D^r\eta(\tau)^{-1}.
\end{multline}
To establish (\ref{zero_mode_transformation_lambda}), we then plug in the formula for the zero mode correlators (\ref{lambdazeromode})  into the right hand side of equation \eqref{zero_mode_transformation_lambda} and compare the result with the right hand side of equation \eqref{transformation_formula_lambda}. Equality follows if we can prove that
\begin{multline}\label{equality_lattice_lambda}\sum_{j=0}^n\sum_{k=0}^j\sum_{r=0}^{n-j}\binom{n}{j}\binom{n-j}{r}\frac{(2j)!(4\lambda)^{n-j-k}(2\lambda)^{2k}}{k!(2(j-k))!}(-\frac{1}{2}+r)_{n-j-r}\left(\frac{c}{(c\tau+d)2\pi i}\right)^{n-j+k-r}\\\times\sum_{\alpha\in L}\langle h, \alpha\rangle^{2(j-k)}q^{\langle \alpha, \alpha\rangle/2}D^r\eta(\tau)^{-1}\\=\sum_{j=1}^n\sum_{m=0}^jN(n,j)(4\lambda)^{n-m}(n-j+1)!\binom{j}{m}\left(\frac{c}{(c\tau+d)2\pi i}\right)^{n-j}\sum_{\alpha\in L}\langle h, \alpha\rangle^{2m}q^{\langle \alpha, \alpha \rangle/2}D^{j-m}\eta(\tau)^{-1}.
\end{multline}
Comparing the coefficients of $\left(\frac{c}{(c\tau+d)2\pi i}\right)^{n-j}\sum_{\alpha\in L}\langle h, \alpha\rangle ^{2m}q^{\langle \alpha, \alpha\rangle/2}D^{j-m}\eta(\tau)^{-1}$ on both sides of this expression implies that proving equation \eqref{equality_lattice_lambda} is equivalent to proving the following combinatorial identity involving Narayana numbers for all $k,l,n\in \mathbb{Z}$ such that $0\le l\le  k\le n$
\begin{multline*}
\sum_{i=l}^k\binom{n}{i}\binom{n-i}{n-k}\frac{(2i)!2^{n-2i+l}(2\lambda)^{n-l}}{(i-l)!(2l)!}(-\frac{1}{2}+n-k)_{k-i}=N(n,n-k+l)(4\lambda)^{n-l}(k-l+1)!\binom{n-k+l}{l}.
\end{multline*}We leave the proof of this identity to the appendix, see Lemma \ref{comb_iden}.
\end{proof}

We note that Proposition~\ref{prop1_lambda} shows that the zero mode correlators of our lattice HHAs have exactly the structure we were hoping for: namely, they are quasi-modular forms whose modular anomalies consist of lower point zero mode correlators. In fact, this structure allows us to sum up the zero mode correlators into a generating series that has nice transformation properties:
\begin{theorem}\label{thm:psitrafolattice}
Define the generating series
\begin{equation}\label{defPsi}
\Psi(\tau, z)=\sum_{n=1}^\infty \frac{S(o(x)^n;\tau)}{(n-1)!n!}\left({2\pi i z}\right)^{2n}. 
\end{equation} 
Then given $\gamma=\begin{pmatrix}a&b\\c&d\end{pmatrix}\in SL(2,\mathbb{Z})$, 
\begin{equation}
\Psi(\gamma\tau, \frac{z}{c\tau+d})=\exp\left(\frac{8\pi i \lambda c z^2}{c\tau+d}\right)\Psi(\tau, z).
\end{equation}
\end{theorem}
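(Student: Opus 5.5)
The plan is to substitute the transformation formula of Proposition~\ref{prop1_lambda} directly into the definition \eqref{defPsi} and reorganize the resulting double sum. Write $\epsilon = \frac{c}{(c\tau+d)2\pi i}$ and $w=\frac{z}{c\tau+d}$. By \eqref{zero_mode_transformation_lambda},
\begin{equation*}
\Psi(\gamma\tau,w)=\sum_{n=1}^\infty \frac{(2\pi i z)^{2n}}{(n-1)!\,n!}\sum_{j=1}^n N(n,j)(4\lambda)^{n-j}(n-j+1)!\,\epsilon^{n-j}\,S(o(x)^j;\tau),
\end{equation*}
since the factor $(c\tau+d)^{2n}$ in \eqref{lambda_zero_mode} cancels exactly against $w^{2n}=z^{2n}(c\tau+d)^{-2n}$. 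This cancellation is the reason for the choice of elliptic variable $z/(c\tau+d)$ and the weight-$2n$ normalization.

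Next we interchange the sums. Setting $k=n-j\ge 0$, we collect the coefficient of $S(o(x)^j;\tau)$ for each fixed $j\ge1$. The key computation is the simplification of the Narayana prefactor. Using $N(n,j)=\frac1n\binom{n}{j}\binom{n}{j-1}$ we get
\begin{equation*}
\frac{N(n,j)(k+1)!}{(n-1)!\,n!}=\frac{1}{n}\cdot\frac{n!}{j!\,k!}\cdot\frac{n!}{(j-1)!\,(k+1)!}\cdot\frac{(k+1)!}{(n-1)!\,n!}=\frac{1}{(j-1)!\,j!\,k!}.
\end{equation*}
The $k$-dependence therefore factors completely, and the coefficient of $\frac{S(o(x)^j;\tau)}{(j-1)!j!}(2\pi i z)^{2j}$ becomes
\begin{equation*}
\sum_{k\ge0}\frac{1}{k!}\bigl(4\lambda\,\epsilon\,(2\pi i z)^2\bigr)^k=\exp\!\left(4\lambda\frac{c}{(c\tau+d)2\pi i}(2\pi i)^2z^2\right)=\exp\!\left(\frac{8\pi i\lambda c z^2}{c\tau+d}\right).
\end{equation*}
This factor is independent of $j$, so it pulls out of the sum and leaves $\Psi(\tau,z)$, which gives the claimed identity.

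The only genuine issue is justifying the rearrangement, i.e.\ treating the identity either formally as power series in $z$ or analytically. Formally there is no obstacle: for each power $z^{2n}$ only finitely many terms contribute, so one can compare coefficients of $z^{2n}$ on both sides directly. That coefficient identity is just \eqref{lambda_zero_mode} combined with the factorial simplification above. If convergence is desired, one bounds $|S(o(x)^n;\tau)|$ for fixed $\tau$ via \eqref{lambdazeromode}. The growth is at most of order $C^n n!$ type in the eigenvalue powers, so the $1/((n-1)!n!)$ weighting gives absolute convergence in $z$, and Fubini then applies. I expect this analytic step to be the main point needing care, but the algebraic core is the Narayana cancellation.
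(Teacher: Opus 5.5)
Your proof is correct and is essentially the paper's argument: substitute the transformation law of Proposition~\ref{prop1_lambda}, cancel $(c\tau+d)^{2n}$ against the rescaled elliptic variable, collapse the Narayana prefactor to $1/((j-1)!\,j!\,k!)$, and resum the exponential series. Your remark on justifying the rearrangement goes slightly beyond the paper, which carries out the manipulation without comment.
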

\begin{proof}
\begin{dmath*}
\Psi(\gamma\tau, \frac{z}{c\tau+d})=\sum_{n=1}^\infty\frac{S(o(x)^n;\gamma\tau)}{(n-1)!n!}\left(\frac{2\pi i z}{c\tau+d}\right)^{2n}=\sum_{n=1}^\infty\sum_{j=0}^{n-1}\frac{N(n, j+1)(4\lambda)^j(j+1)!}{(n-1)!n!}\left(\frac{c}{(c\tau+d)2\pi i }\right)^j(c\tau+d)^{2n}S(o(x)^{n-j};\tau)\left(\frac{2\pi iz}{c\tau+d}\right)^{2n}=\sum_{n=1}^\infty\sum_{j=0}^{n-1}\frac{(4\lambda)^j}{(n-j)!(n-j-1)!j!}\left(\frac{c}{(c\tau+d)2\pi i}\right)^jS(o(x)^{n-j};\tau)(2\pi i z)^{2n}=\sum_{n=1}^\infty\sum_{j=0}^{n-1}\frac{S(o(x)^{n-j};\tau)}{(n-j)!(n-j-1)!j!}\left(\frac{4\lambda c}{(c\tau+d)2\pi i}\right)^j(2\pi i z)^{2n}=\sum_{n=1}^\infty\sum_{j=0}^{n-1}\frac{S(o(x)^{n-j};\tau)}{(n-j)!(n-j-1)!}\left(\frac{4\lambda c(2\pi i)^2z^2}{(c\tau+d)2\pi i}\right)^j\frac{(2\pi i z)^{2n-2j}}{j!}=\sum_{n=1}^\infty\sum_{j=0}^{n-1}\frac{S(o(x)^{n-j};\tau)}{(n-j)!(n-j-1)!}\left(\frac{8\lambda c\pi i z^2}{(c\tau+d)}\right)^j\frac{1}{j!}(2\pi i z)^{2n-2j}=\exp\left(\frac{8\pi i \lambda c z^2}{c\tau+d}\right)\Psi(\tau, z).
\end{dmath*}\end{proof}

\begin{remark}
One can slightly redefine the generating series $\Psi$:
    Let \begin{equation}
    \tilde\Psi(\tau, z)=\sum_{n=0}^\infty \frac{S(o(x)^{n+1};\tau)}{n!(n+1)!}\left({2\pi i z}\right)^{n}\ ,
    \end{equation}
   which also transforms as
\begin{equation}
\tilde\Psi(\gamma\tau, \frac{z}{(c\tau+d)^2})=(c\tau+d)^2\exp\left(\frac{4\lambda c z}{c\tau+d}\right)\tilde\Psi(\tau, z).
\end{equation}
This definition of $\tilde \Psi(\tau,z)$ then closely mimics the generating series for derivatives of modular forms introduced in \cite{MR366813, MR382192} --- see in particular equation (2) for $k=2$ in \cite{MR1280058}.
\end{remark}

\begin{remark}\label{x_a}
One can give a further generalization of the HHA $\{\mathbf{1},x\}$ by replacing $x$ with 
\be
x_a:=h[-1]^2{\bf 1}+ah[-2]{\bf 1}\qquad \textrm{for any } a\in \C\ .
\ee
The HHA relations are then very similar to (\ref{torusmodes_lambda}), with the parameters in (\ref{general_HHA}) given by $A=4\lambda$ and $B=8\lambda^2-12\lambda a$.
The zero mode correlators are however identical to those in Proposition \ref{quasimod_lambda}:
Since $h[-2]{\bf 1}$ is a constant multiple of $L[-1]h[-1]{\bf 1}$ and $o(L[-1]h[-1]{\bf 1})=0$, we have $o(x_a)=o(x)$. 
This of course implies that the zero mode correlators transform as in Proposition~\ref{prop1_lambda}.
In Theorem \ref{zero_mode_transform} below, we will prove this more generally: the modular transformation formulas for zero mode correlators of weight 2 HHAs $\{{\bf 1}, v\}$ are always independent of $B$. 
\end{remark}

\subsection{Virasoro HHAs}\label{Vir}

Let us give another example of a weight 2 HHA based on the Virasoro algebra. Let $V$ be a holomorphic VOA with central charge $c$ a multiple of $24$. Then $\{{\bf 1}, L[-2]{\bf 1}\}$ is an HHA. 
We will show that its zero mode correlators satisfy essentially the same modular transformation properties as those in Subsection \ref{latticeHHAs}. 

We first verify that $\{{\bf 1}, L[-2]{\bf 1}\}$ is an HHA. It is convenient to rescale $L[-2]{\bf 1}$: Define $v:=\frac{1}{(2\pi i)^2}L[-2]{\bf 1}=\omega-\frac{c}{24}{\bf 1}$. Note that we are taking the constant $k$ in Zhu's definition of the torus modes \cite{Z} to be equal to $2\pi i$ so that our conventions match those in \cite{Gaberdiel:2012yb}. In this case, the modes $L[n]$ are those for the vector $(2\pi i)^2(\omega-\frac{c}{24}{\bf 1}),$ that is, $L[n]=(2\pi i)^2(\omega-\frac{c}{24}{\bf 1})[n+1]$. We have

\begin{equation}\label{torusmodes_virasoro}
v[m]v=
    \begin{cases}
        \displaystyle\frac{1}{(2\pi i)^2}L[-1]v
        & \text{if } m=0\\
        \displaystyle\frac{2}{(2\pi i)^2}v & \text{if } m=1\\
         0 & \text{if }  m=2\\
        \displaystyle \frac{c}{2(2\pi i)^4}{\bf 1} & \text{if } m=3\\
         0 & \text{if } m>3.\\
    \end{cases}
\end{equation}
Comparing coefficients with (\ref{general_HHA}), we see that $A=1$ and $B=c/2$.

The big advantage of this HHA is that there are very simple expressions for the zero mode correlators. First note that we have $o(v)=L_0-c/24$. It follows that we can write 
\begin{equation}\label{VirHHAnpoint}
S(o(v)^n;\tau)=\Tr_V\, o(v)^n q^{L_0-c/24}=\left(q\frac{d}{dq}\right)^n\Tr_V q^{L_0-c/24}.
\end{equation}
Since $V$ is holomorphic, $\Tr_V q^{L_0-c/24}$ is modular of weight $0$. The statement that zero mode correlators are quasi-modular forms in this case is thus the well-known fact that derivatives of modular forms are quasi-modular forms. To be more explicit, we can use \eg the transformation formula on the top of page 54 in \cite{MR2409678} to write
\begin{equation}S(o(v)^n;\gamma\tau)=\sum_{j=1}^n\binom{n}{j}\frac{(j)_{n-j}}{(2\pi i)^{n-j}}c^{n-j}(c\tau+d)^{n+j}S(o(v)^j;\tau)\ ,
\end{equation}
or equivalently,

\begin{equation}\label{Virasorotrafo}
(c\tau+d)^{-2n}S(o(v)^n;\gamma\tau)=\sum_{j=0}^{n-1}N(n,j+1)(j+1)!\left(\frac{c}{(c\tau+d)2\pi i}\right)^jS(o(v)^{n-j};\tau)\ .
\end{equation}
This last equation is equivalent to Equation \eqref{lambda_zero_mode} with $\lambda=A/4=\frac{1}{4}.$ The zero mode correlators for our HHA can thus be summed up in a generating series as in the $\lambda=\frac{1}{4}$ lattice HHAs in Subsection \ref{latticeHHAs}, and this series satisfies the modular transformation formula given in Theorem \ref{thm:psitrafolattice}.

\section{Heavy Heisenberg Algebras of Weight 2: The General Case}\label{general}

In the previous section we considered various examples of weight 2 HHAs. In the process, we observed that  their zero mode correlators then all transformed as in Proposition~\ref{prop1_lambda} for the appropriate value of $\lambda$. In particular, they can always be summed into a generating function $\Psi(\tau,z)$ that transforms as in theorem~\ref{thm:psitrafolattice}. In this section, we want to obtain the transformation properties of the zero mode correlators of a general weight 2 HHA as in (\ref{general_HHA}).

\begin{theorem}\label{zero_mode_transform}
Let $\{{\bf 1},v\}$ be a weight 2 HHA satisfying the relations (\ref{general_HHA}). Its zero mode correlators then transform as
\begin{equation} \label{zeromodetrafo}
S(o(v)^n;\gamma\tau)=\sum_{j=0}^{n-1}N(n, j+1)A^j(j+1)!\left(\frac{c}{(c\tau+d)2\pi i}\right)^j(c\tau+d)^{2n}S(o(v)^{n-j};\tau)\ ,
\end{equation}
where the $N(n,j+1)$ are the Narayana numbers introduced in (\ref{Narayana}) and $\gamma\in SL(2,\Z).$
\end{theorem}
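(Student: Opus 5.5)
We plan to prove the formula by induction on $n$, using the recursion (\ref{Gaberdiel_recursion}) to relate the $(n+1)$-point zero mode correlator to a mixed correlator whose modular properties follow from Proposition \ref{prop:HHA}. The starting point is the $(n+1)$-point mixed correlator with $n$ zero modes and two vertex operators,
\[
F\bigl(o(v)^n;(v,\zeta_1),(v,\zeta_2);\tau\bigr).
\]
Since all $o(v)$ commute and $v[0]$ maps $v$ to a multiple of $L[-1]v$, which has vanishing zero mode and vanishing torus zero mode action on $v$ up to the relations (\ref{general_HHA}), applying (\ref{Gaberdiel_recursion}) expresses this mixed correlator as
\[
F(o(v)^{n+1}o(v)\cdots) \text{-type terms } + \sum_{m} g^{|S|}_{m+1}(\zeta_2/\zeta_1,q)\,F\bigl(o(v)^{n-|S|}; (d^S(v)[m]v,\zeta_2);\tau\bigr).
\]
Using (\ref{general_HHA}), only $m=0,1,3$ survive (with $|S|=0$), producing $F(o(v)^{n+2})$, a term $\frac{2A}{(2\pi i)^2}g^0_2 F(o(v)^{n+1})$, and a term $\frac{B}{(2\pi i)^4}g^0_4F(o(v)^n)$; terms with $|S|>0$ should vanish or reduce since $v[0]v\propto L[-1]v$ and $o(L[-1]v)=0$. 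One-point functions of $L[-1]$-descendants vanish, so the $m=0$ term drops.

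Next we take the modular transform of both sides. The left side is a quasi-Jacobi form of weight $2n+4$ by Proposition \ref{prop:HHA}, and Lemma \ref{lem:FulltoZero} lets us write it entirely in terms of zero mode correlators times $g$-functions. The key trick is to extract the $z$-independent part of the identity: compare coefficients in the Laurent/Taylor expansion in $z=z_2-z_1$ (or integrate over $z$ along the real cycle), so that the $B$-term, coming with $g^0_4=P_4$, which is an honest Jacobi form with $\Delta P_4=0$, contributes a term whose transformation is already covered by induction and which cancels against its counterpart on the other side. This explains why the result is independent of $B$. The $A$-term comes with $P_2$, whose anomaly $\Delta P_2=-\frac{2\pi i c}{c\tau+d}$ generates exactly the tail.

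Concretely we obtain a relation of the form
\[
\Delta\bigl[S(o(v)^{n+2})\bigr] = \frac{2A}{(2\pi i)^2}\Bigl(\Delta\text{ of } P_2\text{-term}\Bigr) + (\text{anomalies from lower correlators}),
\]
and plugging in the induction hypothesis (\ref{zeromodetrafo}) for $n+1$ and $n$ yields a recursion for the coefficients of the tail. The final step is to check that $c_{n,j}:=N(n,j+1)A^j(j+1)!$ satisfies this recursion; the base cases $n=1$ ($S(o(v))$ modular of weight 2, since $o(v)$'s one-point function has no tail by weight arguments) and $n=2$ are computed directly. Since the Virasoro example (\ref{Virasorotrafo}) and the lattice example of Proposition \ref{prop1_lambda} both satisfy the same recursion with $A=1$ resp.\ $A=4\lambda$, and the recursion depends only on $A$, we can alternatively conclude by matching: the recursion has a unique solution, and the Virasoro case after rescaling $A$ supplies it, avoiding the direct Narayana identity.

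The main obstacle is the middle step: making rigorous the extraction of a pure zero-mode identity from the mixed correlator, i.e.\ controlling the quasi-Jacobi anomalies of $g^0_2,g^0_4$ and the terms with $|S|>0$ so that everything not involving $P_2$ cancels. I would first try setting up the recursion with $\zeta_2/\zeta_1$ generic and taking the constant term in the $z$-expansion, where $P_2$ contributes $E_2$-type constant terms whose anomaly is precisely $-\frac{2\pi i c}{c\tau+d}$ up to normalization.
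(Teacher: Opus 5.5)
The step your whole argument rests on is not available. The object you start from, $F(o(v)^n;(v,\zeta_1),(v,\zeta_2);\tau)$, has no independently known transformation law. Proposition~\ref{prop:HHA} only says it is quasi-Jacobi of weight $2n+4$; it does not say what its tail is. In fact the recursion you write down expresses exactly this mixed correlator through $F(o(v)^{n+2};\tau)$ and lower zero mode correlators. Its anomaly is therefore determined by that of $F(o(v)^{n+2};\tau)$, which is the very thing you want, so ``taking the modular transform of both sides'' is circular. Passing to the constant term in $z=z_2-z_1$ does not break the circle. On the left it simply returns $F(o(v)^{n+2};\tau)$, and constant-term extraction does not commute with $(\tau,z)\mapsto(\gamma\tau,z/(c\tau+d))$, which does not preserve the real cycle. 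Likewise $\Delta P_4=0$ does not make $B$ drop out. The $B$-term still contributes $\frac{B}{(2\pi i)^4}P_4$ times the (for $A\neq 0$ nonzero) anomaly of $S(o(v)^n;\tau)$, and the ``counterpart on the other side'' it is supposed to cancel against is the unknown anomaly of the left-hand side. The only correlators whose transformation is known a priori are those with no zero modes (Zhu). That is why the paper starts from $S((u,\zeta_1),\ldots,(u,\zeta_n);\tau)$, whose anomaly vanishes, and reduces it completely via Lemma~\ref{lem:FulltoZero}.

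There are two further problems.

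\emph{The terms with $|S|>0$ do not vanish.} Since $[L[-1],v[0]]=0$, $d^S(v)$ is a multiple of $A^{|S|}L[-1]^{|S|}v$. Moreover $(L[-1]^s v)[m]v=(-1)^s m(m-1)\cdots(m-s+1)\,v[m-s]v$ is in general a nonzero multiple of $v$ for $m=s+1$ and of ${\bf 1}$ for $m=s+3$. These terms come with $g^s_{s+2}$ and $g^s_{s+4}$, whose anomalies have $z$-independent pieces (e.g.\ the $-\frac12\bigl(\frac{2\pi i c}{c\tau+d}\bigr)^2$ in $\Delta g^1_3$). So they feed into the tail at the same order as the $P_2$-term and cannot be dropped.

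\emph{The matching with the Virasoro HHA is not automatic.} After a full reduction the universal coefficients are a priori polynomials in $\tilde B$, they multiply several $S_k$ simultaneously, and the Virasoro HHA only realizes $\tilde B=c\in 24\mathbb{N}$. The paper handles this as follows. A nonzero relation $\sum_{k\le K} g_k S^c_k=0$ would be an order-$K$ differential equation satisfied by every holomorphic character of central charge $c$. This is contradicted at $c=24K'$, $K'\ge K$, by the $K'+1$ linearly independent characters $j(\tau)^{K'-i}\bigl(E_4(\tau)^3/\eta(\tau)^{24}\bigr)^i$. Polynomiality in $\tilde B$ then forces the $g_k$ to vanish identically. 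Your claim that ``the recursion has a unique solution'' presupposes both a recursion that is never actually derived and this independence argument.
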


\begin{proof}
First consider the special case $A=0$. Equation \eqref{zeromodetrafo} then implies that  the zero mode correlators $F(o(v)^n;\tau)$ are modular forms of weight $2n$. We will establish this by using \eqref{Gaberdiel_recursion}, which for the relations (\ref{general_HHA}) with $A=0$ turns into 
\begin{multline}\label{0HHA}
S(o(v)^{r+1};(v,\zeta_2),\ldots,(v,\zeta_n);\tau)=\\
S(o(v)^r;(v, \zeta_1),\cdots, (v,\zeta_n);\tau)-\frac{B}{(2\pi i)^4}\sum_{j=2}^nP_4(\frac{\zeta_j}{\zeta_1},q)S(o(v)^r;(v,\zeta_2),\ldots,\widehat{(v,\zeta_j)},\ldots,(v,\zeta_n);\tau),
\end{multline}
where the hat in the second term on the right hand side indicates that the corresponding vertex operator is omitted. In this way we reduce the number of zero modes $o(v)$ inserted. We can do this recursively until we have expressed $S(o(v)^n;\tau)$ in terms of correlators with no zero modes inserted. These are Jacobi forms, meaning they have no modular anomaly. The crucial observation is then that because $A=0$, in the process the only functions $g_i^j$ that we introduced were $P_4$, which are also Jacobi forms and have no modular anomalies. It follows that $S(o(v)^n;\tau)$ has no modular anomaly and is a Jacobi form. Since it is independent of any elliptic variables $\zeta_i$, it is thus a modular form. It is straightforward to check that in the process the weight is preserved at $2n$.

Next, assume $A\neq 0$. We rescale the HHA (\ref{general_HHA}) by introducing the field $u:=v/A$.  This leads to the HHA relations
\begin{equation}\label{BHHA}
u[m]u=
    \begin{cases}
        \displaystyle\frac{1}{(2\pi i)^2}L[-1]u
        & \text{if } m=0\\
        \displaystyle\frac{2}{(2\pi i)^2}u & \text{if } m=1\\
         0 & \text{if }  m=2\\
        \displaystyle \frac{\tilde B}{2(2\pi i)^4}{\bf 1} & \text{if } m=3\\
         0 & \text{if } m>3.\\
    \end{cases}
\end{equation}
where $\tilde B=B/A^2$.  We will then establish that 
\be\label{zeromodetraforeduced}
S(o(u)^n;\gamma\tau)=\sum_{j=0}^{n-1}N(n, j+1)(j+1)!\left(\frac{c}{(c\tau+d)2\pi i}\right)^j(c\tau+d)^{2n}S(o(u)^{n-j};\tau)\ ,
\ee
from which (\ref{zeromodetrafo}) follows immediately by linearity.

To do this, we use the recursion relations derived in \cite{MR5000697} recursively to express a full correlator in terms of zero mode correlators. This is summed up in Lemma~\ref{lem:FulltoZero}, which for our HHA becomes
\be\label{recurse}
S((u,\zeta_1),\ldots, (u,\zeta_n);\tau) = S(o(u)^n;\tau) + \sum_{k=1}^{n-1} G^n_k(\tilde B; \zeta_1,\ldots,\zeta_k;\tau)S(o(u)^k;\tau)\ .
\ee
Here the $G^n_k$ are functions in the algebra generated by quasi-elliptic forms $g^a_b(\zeta_i/\zeta_j\tau,q)$ defined in \cite{MR5000697}. In principle it is straightforward to compute these functions by applying the HHA relations (\ref{BHHA}) repeatedly, but this quickly becomes very tedious. We will instead only rely on two properties of their general form:
First, $G^n_k$ is a quasi-elliptic form of weight $2n-2k$. Second, and crucially for what follows, because it is obtained by applying the HHA relations (\ref{BHHA}) a finite number of times, $G_n^k$ is a polynomial in $\tilde B$, and does not depend on any other properties of the VOA under consideration.

Let us write (\ref{recurse}) as
\begin{multline}
(c\tau+d)^{-2n}S(o(u)^n;\gamma\tau) = 
(c\tau+d)^{-2n} S((u,\gamma\zeta_1),\ldots, (u,\gamma\zeta_n);\gamma\tau) \\
- (c\tau+d)^{-2n}\sum_{k=1}^{n-1} G^n_k(\tilde B;\gamma\zeta_1,\ldots,\gamma\zeta_k;\gamma\tau)S(o(u)^k;\gamma\tau).
\end{multline}
Using the definition of modular anomalies (\ref{modanomaly}), we  write
\be
(c\tau+d)^{-2n+2k}G^n_k(\tilde B;\gamma\zeta_1,\ldots,\gamma\zeta_k;\gamma\tau)= G^n_k(\tilde B;\zeta_1,\ldots,\zeta_k;\tau) + \Delta G^n_k
\ee
and
\be
(c\tau+d)^{-2k}S(o(u)^k;\gamma\tau) = S(o(u)^k;\tau) + \Delta {S}_k\ ,
\ee 
where here we have introduced the shorthand notation $S_k$ for $S(o(u)^k;\tau)$. Together this gives
\be\label{Fntrafo}
(c\tau+d)^{-2n}S(o(u)^n;\gamma\tau)
= S(o(u)^n;\tau) - \sum_{k=1}^{n-1} (G^n_k \Delta S_k + \Delta G^n_k (S_k +\Delta S_k)).
\ee
To obtain the modular anomalies $\Delta S_k$, we use induction in $n$ to apply \eqref{zeromodetrafo}. The $\Delta G_k^n$ we can obtain from the transformation properties in section~\ref{background} and also in \cite{MR5000697}. For our purposes we again only the general form, which follows from observing that $G^k_n$ is a quasi-elliptic form, so that the modular anomaly is of the form
\be
\sum_j \left(\frac{c}{(c\tau+d)2\pi i}\right)^j g^j(\zeta_i;\tau)\ 
\ee
for some functions $g^j(\zeta_i;\tau)$.
Gathering everything, the sum in (\ref{Fntrafo}) can be written as
\be\label{modano}
\sum_j \left(\frac{c}{(c\tau+d)2\pi i}\right)^j \sum_{k=0}^{n-j} b^j_{n,k}(\tilde B;\zeta_i,\tau) S(o(u)^k;\tau)\ ,
\ee
where $b^j_{n,k}(\tilde B;\zeta_i,\tau)$ has weight $2n-2k-2j$, and is again a polynomial in $\tilde B$. It is a polynomial of the $g^j$ and other factors introduced by the modular anomalies.

In principle one could carefully apply the recursion relation (\ref{recurse}) to obtain explicit expressions for the $G^n_k$ and then for the $b^j_{n,k}$. In practice this very quickly becomes unmanageable, especially because of cancellations in the variables $\zeta_i$. We will therefore pursue a different approach. 
The crucial observation is that the $b^n_{k,j}$ only depend on $\tilde B$, and not on any other properties of the VOA. 
If we can fix them for a realization of the HHA (\ref{BHHA}) by a specific VOA, then we know them for any VOA that leads to the same HHA relations.

To do this, we will consider the Virasoro HHA of Subsection \ref{Vir}. In this case we have $\tilde B=c$, the central charge of the VOA. Let us thus match the general expression (\ref{modano}) to the specific expression (\ref{Virasorotrafo}) and see if we can extract the $b^j_{n,k}$. 
Since we are free to choose the modular transformation $\gamma$, we can split the sum (\ref{modano}) into powers of $c/(c\tau+d)$,
\be
\left(\frac{c}{(c\tau+d)2\pi i}\right)^j \sum_{k=0}^{n-j} b^j_{n,k}(\tilde B;\zeta_i,\tau) S(o(u)^k;\tau)\ .
\ee
This allows us to match it to just one term in (\ref{Virasorotrafo}) to obtain
\be\label{linearrelation}
\sum_{k=0}^{n-j} b^j_{n,k}(\tilde B;\zeta_i,\tau) S^c_k(\tau) = N(n,j+1)(j+1)! S^c_{n-j}(\tau)\ , \qquad j=1,2,\ldots n\ .
\ee
We introduced the superscript $c$ in $S^c_k(\tau)$ to emphasize that we are considering correlation functions in a VOA for a specific value of the central charge $c$.
One obvious solution to (\ref{linearrelation}) is of course
\bea\label{bnkjsol}
\bar b^j_{n,k}(\tilde B;\zeta_i,\tau) = \left\{ \begin{array}{ccl} N(n,j+1)(j+1)!&:& k= n-j\\
0 &:& \textrm{else} \end{array}\right.\ .
\eea
This is exactly the solution we want, so we want to argue that it is the only possible solution. 

To establish this, assume we have a different solution $b^j_{n,k}(\tilde B;\tau)=\bar b^j_{n,k}(\tilde B;\zeta_i,\tau)+ g_k(\tilde B;\zeta_i,\tau)$. Then clearly the $g_k$ are again polynomials in $\tilde B$ and satisfy
\be
\sum_{k=0}^{K} g_k(\tilde B;\zeta_i,\tau) S^c_k =0 
\ee
where $K:=n-j$. Using (\ref{VirHHAnpoint}), we can turn this into a differential equation for the character $S^c_0$,
\be\label{diffeqFc}
\sum_{k=0}^K g_k(\tilde B;\zeta_i, \tau) \partial_\tau^k S^c_0(\tau)= 0\ .
\ee
Let us briefly interrupt the flow of the argument to make two comments on the differential equation (\ref{diffeqFc}), even though they are not strictly necessary for the proof. First, if $g_k$ depended on the $\zeta_i$, then by specializing them to different values we could obtain an infinite family of differential equations for the $S^c_0$. By the arguments we give below this is impossible. Therefore $g_k(\tilde B, \tau) \in \C[E_2,E_4,E_6]$. This is exactly what one expects, since (\ref{diffeqFc}) should be a modular differential equation giving modular functions as solutions. Second, such differential equations do indeed appear in the literature and give the characters of VOAs. The difference to our (\ref{diffeqFc}) is that they are either specific to a particular VOA \cite{Gaberdiel:2008pr} or have free parameters that can take different values, such as in the classification program started in \cite{Mathur:1988na}. In our case the only VOA-specific parameter in the differential equation is the central charge $c$, and moreover its dependence is some fixed polynomial. We will argue that this is enough to ensure that the $g_k$ must vanish identically.

Returning to our argument, what (\ref{diffeqFc}) says is that any holomorphic VOA character satisfies this differential equation of degree $K$. This implies that for fixed $c$, there is at most a $K$-dimensional space of characters. This is clearly impossible: Fix $\tilde B =c=24K$ and take tensor products of the monster VOA and the $E_8^3$ VOA, whose characters are respectively $\chi_1(\tau)=j(\tau)$ and $\chi_2(\tau)=E_4(\tau)^3/\eta(\tau)^{24}$. All the modular functions
\be
\chi_1(\tau)^{K-j} \chi_2(\tau)^j\qquad j=0,1,\ldots K
\ee
then are characters of some holomorphic VOA of central charge $24K$. Because $E_4$ and $E_6$ are algebraically independent, they span a $K+1$ dimensional subspace, which is only compatible with (\ref{diffeqFc}) if the differential equation vanishes identically.
This establishes that for the value $\tilde B=24K$, all the $g_k(\tilde B;\tau)$ have to vanish. But now we can repeat this argument for $\tilde B=24(K+1), 24(K+2),\ldots$, showing that the $g_k(\tilde B;\tau)$ vanish for an infinite number of values of $\tilde B$. Since they are polynomials in $\tilde B$ of fixed degree, this means that they must vanish identically. This establishes that indeed (\ref{bnkjsol}) must hold, which implies (\ref{zeromodetraforeduced}) and thus (\ref{zeromodetrafo}).

\end{proof}

By picking $A=4\lambda$ we can thus write down an immediate corollary to Theorem~\ref{thm:psitrafolattice} that holds for any weight 2 HHA:

\begin{cor}\label{cor_transform}
Take a weight 2 HHA that satisfies the relations (\ref{general_HHA}).
The generating function
\begin{equation}
\Psi(\tau, z)=\sum_{n=1}^\infty \frac{S(o(v)^n;\tau)}{(n-1)!n!}\left({2\pi i z}\right)^{2n}\ , 
\end{equation}
then transforms as  
\begin{equation}\label{transform}
\Psi(\gamma\tau, \frac{z}{c\tau+d})=\exp\left(\frac{2\pi i A c z^2}{c\tau+d}\right)\Psi(\tau, z) 
\end{equation}
under modular transformations $\gamma=\begin{pmatrix}a&b\\c&d\end{pmatrix}\in SL(2,\mathbb{Z})$.
\end{cor}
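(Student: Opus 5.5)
The plan is to repeat the computation in the proof of Theorem~\ref{thm:psitrafolattice} almost verbatim, with $4\lambda$ replaced by $A$. The only input about the correlators is the transformation law of Theorem~\ref{zero_mode_transform}. Its form (\ref{zeromodetrafo}) is exactly (\ref{lambda_zero_mode}) with $4\lambda$ replaced by $A$. Once that is in place, the rest is purely formal manipulation of power series in $z$.

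First, substitute the definition of $\Psi$ at $(\gamma\tau, z/(c\tau+d))$. This gives
\[
\sum_{n\ge1}\frac{S(o(v)^n;\gamma\tau)}{(n-1)!\,n!}\left(\frac{2\pi i z}{c\tau+d}\right)^{2n}.
\]
Next, insert (\ref{zeromodetrafo}). The factor $(c\tau+d)^{2n}$ then cancels against $(c\tau+d)^{-2n}$ coming from the rescaled variable.

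Then simplify the Narayana coefficient using
\[
\frac{N(n,j+1)\,(j+1)!}{(n-1)!\,n!}=\frac{1}{(n-j)!\,(n-j-1)!\,j!},
\]
which follows directly from (\ref{Narayana}). Writing $(2\pi i z)^{2n}=(2\pi i z)^{2(n-j)}(2\pi i z)^{2j}$, each term becomes
\[
\frac{S(o(v)^{n-j};\tau)}{(n-j)!\,(n-j-1)!}(2\pi i z)^{2(n-j)}\cdot\frac{1}{j!}\left(\frac{A c (2\pi i)^2 z^2}{(c\tau+d)2\pi i}\right)^j .
\]

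Finally, reindex with $m=n-j\ge1$ and $j\ge0$. The double sum then factors as $\Psi(\tau,z)$ times $\sum_j \frac{1}{j!}\left(\frac{2\pi i A c z^2}{c\tau+d}\right)^j$, and the second factor is the claimed exponential.

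The only point requiring any care is justifying this reordering of the double sum. I would treat it as an identity of formal power series in $z$, where the coefficient of each power of $z$ is a finite sum, so no convergence question arises. Alternatively, one can observe that for fixed $\tau$ the series converges absolutely, by the polynomial-in-$n$ growth of eigenvalues weighted against $1/(n!(n-1)!)$. Either way, there is no real obstacle: the substantive work was already done in Theorem~\ref{zero_mode_transform}.
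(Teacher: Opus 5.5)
Your proposal is correct and follows the paper's own argument: the paper also obtains the corollary by inserting the transformation law of Theorem~\ref{zero_mode_transform} into the series manipulation from the proof of Theorem~\ref{thm:psitrafolattice}, with $4\lambda$ replaced by $A$. Your observation that the identity already holds coefficientwise in $z$, with each coefficient a finite sum, is a small justification that the paper leaves implicit.
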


\begin{remark}Theorem \ref{zero_mode_transform} in the case that $A=0$ agrees with Corollary \ref{cor_transform} since if $A=0$, by Theorem \ref{zero_mode_transform} we have
\[\Psi(\gamma\tau,\frac{z}{c\tau+d})=\sum_{n=1}^\infty\frac{S(o(v)^n,\gamma\tau)}{(n-1)!n!}\left(\frac{2\pi iz}{c\tau+d}\right)^{2n}=\sum_{n=1}^\infty\frac{S(o(v)^n,\tau)(c\tau+d)^{2n}}{(n-1)!n!}\left(\frac{2\pi iz}{c\tau+d}\right)^{2n}=\Psi(\tau,z),\] which agrees with setting $A=0$ in \eqref{transform}. Of course, in this case there are infinitely many other choices of generating functions for the zero mode correlators that would satisfy the same modular transformation formula as $\Psi$. As another example, take the generating function
\[\chi(\tau,z):=\sum_{n=1}^\infty \frac{(2\pi iz)^{2n}}{(2n)!}{S(o(v)^{n};\tau)}.\] Then $\chi(\gamma\tau,\frac{z}{c\tau+d})=\sum_{n=1}^\infty \left({\frac{2\pi iz}{c\tau+d}}\right)^{2n}\frac{S(o(v)^n,\gamma\tau)}{(2n)!}=\chi(\tau,z),$ which closely mimics the exponential generating function and its modular transformation property for the weight 1 HHA case first studied by Miyamoto in \cite{MR1738180} and later by the present authors in \cite{MR5000697}.

\end{remark}

\section{Eigenspaces of $L_0$ and $o(x)$}\label{Hankel}

\subsection{Lattice VOAs}

Let us discuss how the generating function $\Psi(\tau,z)$ is related to the dimension of the eigenspaces of $L_0$ and $o(v)$. In this section we will give explicit expressions for these in the case of lattice VOAs. In the next section, we will sketch an idea of how to use the modular transformation property of $\Psi(\tau,z)$ to determine the asymptotic behavior of the dimensions for large eigenvalues.

To set this up, let us first decompose the state space of the VOA into eigenspaces of $L_0$ and $o(v)$:
\be
V = \bigoplus_{n,Q} V_{n,Q}
\ee\label{nQ_def}
with
\be
L_0 w = n w \ , \qquad o(v) w = Q w \qquad w \in V_{n,Q}\ .
\ee
We then define 
\be
d(n,Q):= \dim V_{n,Q}\ . 
\ee
Let us now relate the coefficients $d(n,Q)$ to the generating function $\Psi(\tau,z)$.
We start by writing the zero mode correlators as
\be
S(o(v)^k;\tau) = \sum_{n,Q} d(n,Q) Q^k q^{n-c/24}\ .
\ee
Next we recall the Bessel sum
\be
\sum_{k=1}^\infty \frac{Q^k (2\pi iz)^{2k}}{(k-1)!k!}= -2\pi \sqrt{Q}z J_1(4\pi \sqrt{Q}z)
\ee
where $J_1(x)$ is the Bessel function of the first kind. It then follows from its definition (\ref{defPsi}) that we can write the generating function as
\be\label{PsiHankel}
\Psi(\tau,z) = -2\pi\sum_{n,Q} d(n,Q) \sqrt{Q}z J_1(4\pi \sqrt{Q}z) q^{n-c/24}.
\ee
In Section~\ref{ss:asymptotics}, we will use this expression to extract information about the asymptotic behavior of the $d(n,Q)$. But for the moment let us explain how to obtain them for lattice VOAs with $v$ given by $x=h[-1]^2{\bf 1}$ as in section~\ref{latticeHHAs}.

For this we first define the generating function for the $d(n,Q)$
\be
\Phi(\tau,z) := \sum_{n,Q} d(n,Q) q^n y^Q\ .
\ee
The advantage of this is that $\Phi$ is composed of well-known modular objects, as we establish now. First we have the following proposition.
\begin{proposition}
    Let $V$ be a holomorphic lattice VOA with $x=h[-1]^2{\bf 1}$. Then 
    \be\label{gen_series}
    \Phi(\tau,z)
    =\left(\sum_{\alpha\in L}y^{\langle h, \alpha\rangle^2}q^{\langle \alpha, \alpha\rangle/2}\right)\eta(y^{4\lambda}q)^{-1}\eta(q)^{-l+1}q^{l/24}\ ,
    \ee 
    where the sum in $\Phi(\tau,z)$ is over all eigenvalues $n$ and $Q$ of $L_0$ and $o(x)$, respectively.
\end{proposition}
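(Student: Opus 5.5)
The plan is to use the explicit diagonalization of $o(x)$ on the Fock-space basis from the proof of Proposition~\ref{quasimod_lambda}. As there, extend $h=h_1$ to an orthogonal basis $h_1,\ldots,h_l$ of $\mathfrak{h}$. The vectors
\[h_1(-n_{1k_1})\cdots h_1(-n_{11})\cdots h_l(-n_{lk_l})\cdots h_l(-n_{l1})\mathfrak{e}_\alpha\]
then form a basis of $V_L$ consisting of simultaneous eigenvectors of $L_0$ and $o(x)$. Their $L_0$-eigenvalue is $\langle\alpha,\alpha\rangle/2+\sum_{i,j}n_{ij}$. The displayed formula in that proof gives the $o(x)$-eigenvalue as $\langle h,\alpha\rangle^2+4\lambda\sum_{j}n_{1j}-\frac{\lambda}{6}$. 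One detail needs care: for $\langle h,h\rangle=2\lambda$ one should normalize $h_1=h$, so that the $h_1$ oscillators are those contributing the $4\lambda$-shifts. So $d(n,Q)$ counts basis vectors with these eigenvalues, and $\Phi$ is just the sum over the basis of $q^{L_0}y^{o(x)}$.

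This sum factorizes over the three independent pieces of data: $\alpha$, the $h_1$-partition, and the partitions for $h_2,\ldots,h_l$.

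The lattice part contributes $\sum_{\alpha\in L} y^{\langle h,\alpha\rangle^2}q^{\langle\alpha,\alpha\rangle/2}$.

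Each of the $l-1$ transverse oscillator directions contributes $\prod_{m\ge1}(1-q^m)^{-1}=q^{1/24}\eta(q)^{-1}$. Together they give $q^{(l-1)/24}\eta(q)^{-l+1}$.

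The $h_1$ direction weighs each oscillator $h_1(-m)$ by $q^m y^{4\lambda m}=(y^{4\lambda}q)^m$. It therefore contributes $\prod_m(1-(y^{4\lambda}q)^m)^{-1}=(y^{4\lambda}q)^{1/24}\eta(y^{4\lambda}q)^{-1}$, where $\eta$ is evaluated at the shifted nome.

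Multiplying these together, the leftover powers are $q^{l/24}y^{\lambda/6}$. The factor $y^{\lambda/6}$ exactly cancels the constant shift $y^{-\lambda/6}$ coming from $-\frac{\lambda}{6}$ in the eigenvalue. The result is \eqref{gen_series}.

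The only real obstacle is bookkeeping: keeping the $q^{1/24}$ factors from $\eta$ consistent (the $\Phi$ here uses $q^n$ without the $-c/24$ shift), and checking that the $\lambda/6$ constant cancels against the shifted $\eta$ argument. I would verify this by comparing the $q^0$ term, namely the vacuum with $Q=-\lambda/6$, on both sides. Convergence is not an issue: one works with formal power series in $q$, whose coefficients are finite sums of powers of $y$ (or, for $|y^{4\lambda}q|<1$, with honest holomorphic functions).
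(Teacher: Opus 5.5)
Your proposal is correct and follows essentially the same route as the paper. You diagonalize $L_0$ and $o(x)$ on the Fock basis with $h_1=h$, and you factor the sum into three pieces: the lattice series, the $h_1$-oscillators weighted by $(y^{4\lambda}q)^m$, and the $l-1$ transverse oscillators. You then track the $q^{1/24}$ factors and check that the $y^{\lambda/6}$ coming from $\eta(y^{4\lambda}q)^{-1}$ cancels the $-\lambda/6$ shift in the $o(x)$-eigenvalue. Your insistence that these vectors form a genuine basis, rather than merely a spanning set as the paper phrases it, is exactly what is needed for the count to give $d(n,Q)=\dim V_{n,Q}$.
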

\begin{proof}
The VOA $V$ is spanned by vectors of the form \be\label{vector} h_1(-n_{1k_{1}})\cdots h_i(-n_{ij})\cdots h_l(-n_{lk_{l}}) \mathfrak{e}_{\alpha}\
\ee and we have
\begin{multline}\label{conformal_wt} L_0 h_1(-n_{1k_{1}})\cdots h_i(-n_{ij})\cdots h_l(-n_{lk_{l}}) \mathfrak{e}_{\alpha}\ \\=\left(\frac{\langle \alpha, \alpha\rangle}{2}+\sum_{i=1}^l\sum_{j=1}^{k_i} n_{ij}\right)h_1(-n_{1k_{1}})\cdots h_i(-n_{ij})\cdots h_l(-n_{lk_{l}}) \mathfrak{e}_{\alpha}
\end{multline} and \begin{multline}\label{eigenvalue_x0} o(x) h_1(-n_{1k_{1}})\cdots h_i(-n_{ij})\cdots h_l(-n_{lk_{l}}) \mathfrak{e}_{\alpha}\ \\=\left(\langle h_1, \alpha\rangle^2+4\lambda\sum_{j=1}^{k_1} n_{1j}-\frac{\lambda}{6}\right)h_1(-n_{1k_{1}})\cdots h_i(-n_{ij})\cdots h_l(-n_{lk_{l}}) \mathfrak{e}_{\alpha}.
\end{multline}

The series $\sum_{\alpha\in L}y^{\langle h_1, \alpha\rangle{^2}}q^{\langle \alpha, \alpha\rangle/2}$ counts the number of ways of choosing $\alpha\in L$ to form a vector \eqref{vector}. Such a choice of $\alpha$ gives a  contribution of $\frac{\langle \alpha, \alpha\rangle }{2}$ to the conformal weight and of $\langle h_1, \alpha\rangle ^2$ to the eigenvalue of $o(x)$ of the vector. The remaining factors in the RHS of \eqref{gen_series} count the number of ways of choosing the $n_{ij}$s to form \eqref{vector}: The factor $\eta(y^{4\lambda}q)^{-1}q^{1/24}=\sum_{n=0}^\infty p(n)y^{4\lambda n-\frac{\lambda}{6}}q^{n}$ counts the number of ways of choosing $n_{11},\cdots, n_{1k_1}$ such that $\sum_{j=1}^{k_1}n_{1j}=n.$ This gives a contribution of $n$ to the conformal weight and of $4\lambda n$ to the eigenvalue of $o(x)$. The $y^{-\frac{\lambda}{6}}$ in $\sum_{n=0}^\infty p(n)y^{4\lambda n-\frac{\lambda}{{6}}}q^{n}$ corresponds to the $-\frac{\lambda}{6}$ in the RHS of \eqref{eigenvalue_x0}. Finally, $\eta(q)^{-l+1}q^{(l-1)/24}=(\sum_{n=0}^lp(n)q^n)^{l-1}$ counts the number of ways of choosing the $n_{ij}$ for $2\le i\le l.$
\end{proof}

Let us now relate this to Jacobi theta series. First, we want to pick our vector $h$ in the definition (\ref{torusmodes_lambda}) of the HHA to be in the lattice, $h\in L$. The lattice theta function \be
\Theta_{L,h}(\tau,z) := \sum_{\alpha\in L} q^{\langle\alpha,\alpha\rangle/2} y^{\langle h, \alpha\rangle}= \sum_{n,\ell} c(n,\ell)q^n y^\ell \ 
\ee
is then a Jacobi form of weight $l/2$ and index $\lambda=\langle h,h\rangle/2$ \cite{MR781735}. (Note that the index is integer because $L$ is an even lattice. See \eg \cite{MR1043255} for how to obtain the theta series for low dimensional examples.)
Next let us introduce 
\be
\phi(\tau,z) : = \sum_{\alpha\in L}y^{\langle h_1, \alpha\rangle^2}q^{\langle \alpha, \alpha\rangle/2}= \sum_{n,Q\geq 0} \tilde c(n,Q)y^Q q^n\ ,
\ee
so that we can write (\ref{gen_series}) as $\Phi(\tau,z) =  \phi(\tau,z)\eta(y^{4\lambda}q)^{-1}\eta(q)^{-l+1}q^{l/24}$.
This object is essentially the theta function of the lattice $L$:
We can namely write
\be
\phi(\tau,z) = \sum_{n,Q> 0} 2c(n,\sqrt{Q})y^Qq^n + \sum_n c(n,0)y^0 q^n\ ,
\ee
where we used the fact that $c(n,\ell)=c(n,-\ell)$; in terms of the coefficients we have
\be\label{ctilde}
\tilde c(n,Q) = \left\{ \begin{array}{cc} 2c(n,\sqrt{Q}) &: Q> 0\\ c(n,0) &: Q=0 \\ 0 &: Q<0 \end{array}\right. \ .
\ee
We also have
\be\label{etapower}
\eta(\tau)^{-l}=\sum_n b_l(k) q^k\ . 
\ee
In total we have
\be
\Phi(\tau,z) = \phi(\tau,z) \eta(y^{4\lambda}q)^{-1}\eta(q)^{-l+1}q^{l/24}= \\
\sum_{n,Q, m, k} \tilde c(n,Q) b_1(k) b_{l-1}(m) y^Q q^n y^{4\lambda k}q^k q^m q^{l/24}\ .
\ee
We can thus express $d(n,Q)$ as
\be\label{dnQexp}
d(n,Q) = \sum_{k,m} \tilde c(n-k-m-l/24,Q-4\lambda k)b_1(k) b_{l-1}(m)\ .
\ee
To be explicit about the summation range in (\ref{dnQexp}), note that $\Theta$ is a Jacobi form (and not a weak Jacobi form), so that the discriminant of the argument of $c(n,\sqrt{Q})$ has to be non-negative. Together with (\ref{ctilde}) and (\ref{etapower}) this gives the summation range
\bea
-1/24\leq &k& \leq Q/4\lambda\\ 
- (l-1)/24\leq &m& \leq n-l/24- Q/4\lambda\ .
\eea
In conclusion, we have given an explicit expression for the coefficients $d(n,Q)$ of the zero mode correlators and hence also for their generating function $\Psi(\tau,z)$ for the case of lattice HHAs. This expression is in terms of basic modular objects, namely the theta function $\Theta_{L,h}(\tau,z)$ of the lattice and the reciprocal of the eta function $\eta(\tau)^{-1}$.

\subsection{Asymptotic behavior of the dimension of states}\label{ss:asymptotics}

Finally, let us sketch an application for the generating function $\Psi(\tau,z)$ and its transformation properties. The goal is to obtain the asymptotic behavior of the dimension $d(n,Q)$ of the eigenspaces as introduced above for large eigenvalues of $L_0$ and $o(x)$. In this we mimic the approach for the ordinary character, whose modular properties allow to obtain the asymptotic behavior of the eigenspaces of $L_0$. These asymptotics are important in physics, where they describe the entropy of the theory at high energies. Here this approach was pioneered by Cardy \cite{Cardy:1986ie}.

For this, it is convenient to define the shifted weight
\be
\bar n = n- c/24
\ee
and the discriminant
\be
\Delta = A\bar n-Q\ = 4\lambda \bar n -Q
\ee
of a state, where $A$ is the parameter in (\ref{general_HHA}) for general weight 2 HHAs, which is equal to $4\lambda$ in the case of lattice VOAs. Throughout this section we will assume that $A$ is real and positive, $A>0$.
For states with large positive discriminant $\Delta$, we will argue that the leading asymptotic behavior is universal. This means that it only depends on the state with the most negative discriminant $\Delta_0 = - |\Delta_0|$. 

Following the philosophy of \cite{Cardy:1986ie}, we want to extract the asymptotic behavior of the $d(n,Q)$ from the modular transformation properties of $\Psi(\tau, z)$. To this end we follow \cite{Apolo:2024kxn} and introduce crossing kernels. For simplicity we will assume that the central charge is a multiple of 24 and set $c=24t$. 

Our starting point is expression (\ref{PsiHankel}) for the generating function.
We need a way to extract the coefficients $d(n,Q)$ from $\Psi(\tau,z)$. To this end we use Hankel transforms.
We recall that for a sufficiently smooth and fast decaying function $f(z)$, the Hankel transform of order 1 is defined as
\be
F(k) = \int_0^\infty f(z) J_1(kz) z dz\ .
\ee
We can then recover $f(z)$ using the inverse Hankel transform
\be
f(z) = \int_0^\infty F(k) J_1(kz) k dk\ .
\ee
Another way of saying this is that the Bessel functions form an orthonormal basis in the sense of
\be
\int_0^\infty J_1(kz)J_1(k'z) z dz = \frac{\delta(k-k')}k\ .
\ee
Let us apply this to our case. We would like to apply a Hankel transform to (\ref{PsiHankel}) to extract $d(n,Q)$. This is clearly not quite so simple, since the objects involved are not functions but distributions.
To put it another way, $\Psi(\tau,z)$ may not satisfy the decay condition that a standard Hankel transform requires. Conversely, since $\Psi(\tau,z)$ is a sum rather than an integral, we expect $F(k)$ to be a distribution rather than a function. There is a theory of Hankel transforms for distributions, discussed \eg in \cite{MR201930,MR211211}. Another approach would be to try to smooth out the distributions into functions. 

We will ignore these issues for the moment and go ahead by defining
\be
F(n,k) = \sum_{Q,m} d(n,Q)\delta(4\pi\sqrt{Q}-k)\delta(m-n)
\ee
so that indeed
\be
\Psi(\tau,z) = -\frac{z}2\int dn \int_0^\infty F(n,k) J_1(kz) q^{n-t} kdk
\ee
Formally the inverse Hankel transform then indeed gives
\be\label{Fnk}
F(n,k) = -2 \int_C d\tau \int_0^\infty dz  q^{-n+t} \Psi(\tau,z) J_1(k z)\ . 
\ee
To be more precise, here the $z$ integral comes from the inverse Hankel transform. The $\tau$ integral extracts the correct power of $q$. The contour $C$ needs to be chosen such that $q$ circles the origin. We will give a specific choice below.

We now want to use the modular transformation properties of $\Psi(\tau,z)$ to obtain the asymptotic behavior of $d(n,Q)$. To do this, we want to use the method of crossing kernels, as was used for instance in \cite{Apolo:2024kxn}. To do this, we first perform an $S$-transform to write (\ref{Fnk}) as
\be
F(n,k) = -2 \int_C d\tau \int_0^\infty dz  q^{-n+t} e^{-2\pi i A z^2/\tau}\Psi(-1/\tau,z/\tau) J_1(k z)\ . 
\ee
Next we use the fact that $\Psi$ can be written as a sum over $n'$ and $k'$, which namely gives
\be
\Psi(\tau,z) = -\frac12 \sum_{n',k'} d(n',k') k' z J_1(k'z) q^{n'-t}\ ,
\ee
where we wrote $4\pi \sqrt{Q'}=k'$. This allows us to write 
\be\label{FnkfromK}
F(n,k) = \sum_{n',k'} d(n',k') K(n,k;n',k')\ ,
\ee
where the \emph{crossing kernel} is defined as
\be\label{crossingkernel}
K(n,k;n',k') = k' \int_C d\tau \int_0^\infty dz 
e^{-2\pi i (\nb\tau + \nb'/\tau)} e^{-2\pi iA z^2/\tau} J_1(k'z/\tau)J_1(kz) z/\tau\ , 
\ee
and where we used the notation $\nb = n-t$ and $\nb' = n'-t$.

We note that we were very unrigorous in our manipulations so far. In particular, we did not worry about the existence of Hankel transforms for our non-decaying functions, and whether we were allowed to exchange integrals and sums. We leave these considerations for future work. However, we do note that the Hankel transform in the definition (\ref{crossingkernel}) of the crossing kernel is well-defined in the usual sense: since $\Im(\tau)>0$, the integrand decays exponentially at infinity and thus satisfies the integrability condition of the Hankel transform. We will therefore take (\ref{crossingkernel}) as our starting point and investigate its behavior.

We want to find an approximate expression for this crossing kernel by evaluating the integral for large values of $n$ and $k$, but $n'$ and $k'$ of order 1. Let us start by considering the $z$ integral. 
First, assuming $k$ is large and $z$ is order 1 we can approximate
\be
J_1(kz) \sim \sqrt{\frac2{\pi kz}} \cos( kz - 3\pi/4)\ .
\ee
Next, as we will see below, we will take $\tau$ to have small imaginary part $y \sim n^{-1/2}$. We can then approximate the above by 
\be
J_1(k'z/\tau) \sim \frac{ e^{k'z/y}}{\sqrt{-2\pi i k'z/y}}\ .
\ee
Assuming these approximations are valid, the $z$ integral has the Gaussian integrand
\be
e^{-2\pi A z^2/y + (k'/y\pm ik)z}\ .
\ee
In order to justify these approximations somewhat, we note that along the contour of integration of $z$ the absolute value of the integrand peaks at
\be
z = \frac{k'}{4A\pi}\  ,
\ee
that is at a $\tau$-independent positive $z$ value of order 1. If $y$ is small, then the second derivative at the peak is large, so that the peak becomes sharp. This then justifies to argue that for the relevant part of the integral, $-k'z/\tau$ is large because $\tau$ is small and $k'$ and $z$ are of order one, and that $kz$ is large because $k$ is large. Moreover, because the peak is sharp, we can also extend the integration range to $-\infty$ without affecting the dominant contribution to the integral much. We thus get the Gaussian integral
\be
\sim \frac{-i}{2\pi} \sqrt{\frac{k'}{-iy k}} e^{-2\pi i (\nb iy  + \nb'/iy)} \int_{-\infty}^\infty dz e^{-2\pi A z^2/y + (k'/y+ik)z}
= \frac{ \sqrt{y\frac{k'}{k }} e^{\frac{(k'+i k y)^2}{8 \pi  A y}+2 \pi  (y \nb-\frac{\nb'}{y})}}{2 \pi \sqrt{2A}}.
\ee
Next let us turn to the $\tau$ integral. We will want to evaluate it using a saddle point approximation. The exponent is a meromorphic function in $y$ (or $\tau$). By Cauchy's theorem, this allows us a certain amount of choice for the contour $C$.
We want to pick it of the form
\be\label{contour}
\tau = iy_0 + r\ , \qquad r \in [-\pi,\pi)\ ,
\ee
where we can choose some $y_0>0$. Since we want to evaluate the integral by a saddle point approximation, we need to choose $y_0$ in such a way that the contour actually crosses a saddle point, leading to a maximum along the contour of integration.
A straightforward calculation shows that if we pick
\be
y_0=\frac{\sqrt{k^{'2}-16 \pi^2  \nb'A}}{\sqrt{-k^2+16 \pi ^2 \nb A}}\ ,
\ee
then we indeed find a maximum along the contour (\ref{contour}) at $r=0$. Since we are only interested in the leading contribution in large $n$ and $k$, we don't even evaluate the saddle point approximation. Instead we simply evaluate the integrand at the maximum $y_0$, giving
\be\label{crosskerasym}
\log K(n,k;n',k') \sim \frac{\sqrt{k^{'2}-16 \pi^2  \nb'A} \sqrt{16 \pi ^2 \nb A-k^2}}{4 \pi A}+i k k'/4\pi A\ .
\ee
If we took into account the actual saddle point integral and not just the maximum, the real part of (\ref{crosskerasym}) would only receive logarithmic corrections in $n,k$ to this expression. To our approximation, we are thus justified in neglecting these. Since the imaginary part is periodic, it can receive significant corrections and should not be trusted. However, we are really only interested in the absolute value of $K$, so that we will simply suppress the imaginary part. In fact, we know that the $d(n,Q)$ will turn out to be real and positive.

Let us now discuss what the asymptotic behavior (\ref{crosskerasym}) implies for the asymptotic behavior of $d(n,Q)$. To this end let us rewrite it in terms of the discriminants of $n,k$ and $n',k'$,
\be
\log |K(n,k;n',k')| \sim \ 4\pi \sqrt{-\Delta' \Delta}/A\  .
\ee
We are interested in the behavior of $d(n,Q)$ for large $\Delta$. We want to extract this from (\ref{FnkfromK}). To do this we observe that for large $\Delta$, $K(n,k;n',k')$ grows fastest the more negative $\Delta'$ is. The asymptotic behavior of $d(n,Q)$ for large $\Delta$ is thus determined by the states with largest negative discriminant $\Delta'=\Delta_0$. (If we were working with weak Jacobi form, this would correspond to the most polar state.)
This means we have the asymptotics
\be\label{dnQasymptotics}
\log d(n,Q) \sim 4\pi \sqrt{-\Delta_0\Delta}/A\ .
\ee
The above chain of arguments is obviously not a proof of these asymptotics, but rather a sketch that provides evidence in favor of it. Nonetheless, we believe that the final expression (\ref{dnQasymptotics}) is correct, and that a rigorous argument along the lines that we sketched can be found. We leave this to future work.

\appendix
\section{Combinatorial Background}
In this section we review some combinatorial background required in proving proposition \ref{prop1_lambda}. We will use the following identities:

If $l,m,n\in \mathbb{Z}$ and $r,s\in \mathbb{R}$ then
\begin{equation}\label{binom_1}\binom{r}{l}=(-1)^l\binom{l-r-1}{l}
\end{equation} and

\begin{equation}\label{binom_2}\sum_{k}\binom{r}{m+k}\binom{s}{n-k}=\binom{r+s}{m+n}.
\end{equation}

The following lemma is used in the proof of proposition \ref{prop1_lambda}.
\begin{lem}\label{comb_iden_lambda}
For all $k,l,n\in \mathbb{Z}$ such that $0\le l\le  k\le n$
\begin{multline}\label{comb_iden}
\sum_{i=l}^k\binom{n}{i}\binom{n-i}{n-k}\frac{(2i)!2^{n-2i+l}\lambda^{n-l}}{(i-l)!(2l)!}(-\frac{1}{2}+n-k)_{k-i}=N(n,n-k+l)(2\lambda)^{n-l}(k-l+1)!\binom{n-k+l}{l}.
\end{multline}
\end{lem}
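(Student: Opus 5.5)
The plan is to reduce the identity to the Chu--Vandermonde convolution for rising factorials after some elementary normalization. First the $\lambda$-dependence factors out. Dividing both sides of (\ref{comb_iden}) by $(2\lambda)^{n-l}$ turns the power of $2$ on the left into $2^{n-2i+l}/2^{n-l}=4^{l-i}$. So it suffices to prove the $\lambda$-free identity
\begin{equation*}
\sum_{i=l}^k\binom{n}{i}\binom{n-i}{n-k}\frac{(2i)!\,4^{l-i}}{(i-l)!\,(2l)!}\Bigl(n-k-\tfrac12\Bigr)_{k-i}=N(n,n-k+l)\,(k-l+1)!\binom{n-k+l}{l}.
\end{equation*}

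Next I would rewrite the double factorials using $(2i)!/4^i=i!\,(\tfrac12)_i$, and similarly for $l$. Combined with $\binom{n}{i}\binom{n-i}{n-k}=\frac{n!}{i!\,(n-k)!\,(k-i)!}$, the factor $i!$ cancels and the summand becomes
\begin{equation*}
\frac{n!}{(n-k)!\,l!}\cdot\frac{(\tfrac12)_i}{(\tfrac12)_l}\cdot\frac{(n-k-\tfrac12)_{k-i}}{(i-l)!\,(k-i)!}.
\end{equation*}
Substituting $i=l+j$ with $0\le j\le m:=k-l$ and using $(\tfrac12)_{l+j}/(\tfrac12)_l=(l+\tfrac12)_j$, the left side becomes
\begin{equation*}
\frac{n!}{(n-k)!\,l!\,m!}\sum_{j=0}^{m}\binom{m}{j}\bigl(l+\tfrac12\bigr)_j\bigl(n-k-\tfrac12\bigr)_{m-j}.
\end{equation*}

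The key step is the Vandermonde identity for rising factorials, $\sum_j\binom{m}{j}(a)_j(b)_{m-j}=(a+b)_m$. This follows from (\ref{binom_2}) via (\ref{binom_1}), since $(a)_j/j!=(-1)^j\binom{-a}{j}$. With $a+b=n-k+l$, the sum collapses to $(n-k+l)_{k-l}=\frac{(n-1)!}{(n-k+l-1)!}$. Hence the left side equals
\begin{equation*}
\frac{n!\,(n-1)!}{(n-k)!\,l!\,(k-l)!\,(n-k+l-1)!}.
\end{equation*}

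Finally I would expand the right side with $p=n-k+l$:
\begin{equation*}
N(n,p)=\frac1n\cdot\frac{n!}{p!\,(k-l)!}\cdot\frac{n!}{(p-1)!\,(k-l+1)!},\qquad (k-l+1)!\binom{p}{l}=\frac{(k-l+1)!\,p!}{l!\,(n-k)!}.
\end{equation*}
Multiplying these, the factors $(k-l+1)!$ and $p!$ cancel and the product simplifies to the same expression as the left side.

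The main obstacle, such as it is, is the bookkeeping of the half-integer Pochhammer symbols and checking the degenerate edge case $p=0$, i.e.\ $k=n$ and $l=0$. There the right side vanishes because $\binom{n}{-1}=0$. The left side vanishes as well, since it contains $(0)_n=0$ for $n\ge1$, so no separate argument is needed beyond noting these conventions.
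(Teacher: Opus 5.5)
Your proof is correct and takes essentially the same route as the paper. Both arguments cancel the $\lambda$-dependence, rewrite $(2i)!/4^i$ in half-integer form, and collapse the sum with the Chu--Vandermonde convolution. The paper does this on half-integer binomial coefficients through upper negation \eqref{binom_1}, then \eqref{binom_2}, then \eqref{binom_1} again, which is exactly your rising-factorial identity $\sum_j\binom{m}{j}(a)_j(b)_{m-j}=(a+b)_m$; your explicit check of the degenerate case $n-k+l=0$ is a small point the paper leaves implicit.
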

\begin{proof}We first note that the powers of $\lambda$ on both sides of \eqref{comb_iden} are the same and can thus be canceled. Expanding the binomial coefficients and the Narayana numbers in the resulting expression into factorials, canceling like terms and rearranging the resulting expression, we can write \eqref{comb_iden} as
\begin{equation}\label{iden2}\sum_{i=l}^k\binom{k}{i}\frac{(2i)!}{(i-l)!2^{2i}}(-\frac{1}{2}+n-k)_{k-i}=\frac{(n-1)!}{(n-k+l-1)!}\binom{k}{l}\frac{(2l)!}{2^{2l}}.
\end{equation}
Since $k\ge l$, we simplify our notation by writing $k=l+p$ for some $p\in \mathbb{N}$ and shift the $i$ index to range from $0$ to $p$. Equation \eqref{iden2} thus becomes

\begin{equation*}\sum_{i=0}^p\binom{l+p}{l+i}\frac{(2(i+l))!}{i!2^{2(i+l)}}(-\frac{1}{2}+n-l-p)_{p-i}=\frac{(n-1)!}{(n-p-1)!}\binom{l+p}{l}\frac{(2l)!}{2^{2l}}.\end{equation*}
We have
\begin{dmath*}
\sum_{i=0}^p\binom{l+p}{l+i}\frac{(2(i+l))!}{i!2^{2(i+l)}}(-\frac{1}{2}+n-l-p)_{p-i}=\sum_{i=0}^p\binom{l+p}{l+i}\frac{(2i+2l)(2i+2l-1)\cdots(2l+1)(2l)!}{i!2^{2i}2^{2l}}(-\frac{1}{2}+n-l-p)_{p-i}=\sum_{i=0}^p\binom{l+p}{l+i}\frac{(i+l)(i+l-1)\cdots (l+1)(i+l-\frac{1}{2})(i+l-\frac{3}{2})\cdots (l+\frac{1}{2})(2l)!}{i!2^{2l}}(-\frac{1}{2}+n-l-p)_{p-i}=\sum_{i=0}^p\binom{l+p}{l+i}\binom{i+l}{i}\binom{i+l-\frac{1}{2}}{i}\frac{i!(2l)!}{2^{2l}}(-\frac{1}{2}+n-l-p)_{p-i}=\sum_{i=0}^p\frac{(l+p)!(2l)!}{l!2^{2l}}\binom{i+l-\frac{1}{2}}{i}\binom{-\frac{1}{2}+n-l-i-1}{p-i}=\frac{(l+p)!(2l)!}{l!2^{2l}}\sum_{i=0}^p(-1)^{p-i}\binom{p+\frac{1}{2}-n+l}{p-i}(-1)^i\binom{-\frac{1}{2}-l}{i}=(-1)^p\frac{(l+p)!(2l)!}{l!2^{2l}}\binom{p-n}{p}=\frac{(l+p)!(2l)!}{l!2^{2l}}\binom{n-1}{p}=\frac{(n-1)!}{(n-p-1)!}\binom{l+p}{l}\frac{(2l)!}{2^{2l}},
\end{dmath*}
where in the last two lines above, we have applied equation \eqref{binom_1} and then equation \eqref{binom_2}, followed by another application of equation \eqref{binom_1}.
\end{proof}

\def\cprime{$'$}

\end{document}